\numberwithin{equation}{section}
\theoremstyle{plain}
\newtheorem{theorem}{Theorem}[section]
\newtheorem{proposition}[theorem]{Proposition}
\newtheorem{lemma}[theorem]{Lemma}
\newtheorem{conjecture}[theorem]{Conjecture}
\theoremstyle{remark}
\theoremstyle{definition}
\newcommand{\R}{\mathbb{R}}
\newcommand{\Q}{\mathbb{Q}}
\newcommand{\Z}{\mathbb{Z}}
\newcommand{\N}{\mathbb{N}}
\newcommand{\cP}{\mathcal{P}}
\newcommand{\ldim}{\underline{\dim}}
\newcommand{\udim}{\overline{\dim}}
\newcommand{\F}{\mathcal{F}}
\newcommand{\e}{\varepsilon}
\DeclareMathOperator{\Dim}{Dim}
\DeclareMathOperator{\supp}{supp}
  \newcommand{\hdim}{\dim_{\mathsf{H}}}
  \newcommand{\fdim}{\dim_{\mathsf{F}}}
  \newcommand{\pdim}{\dim_{\mathsf{P}}}
  \newcommand{\ubdim}{\overline{\dim}_{\mathsf{B}}}
  \newcommand{\sdim}{\dim_{\mathsf{S}}}
\begin{document}

\title[Projections of self-similar and other fractals]{Projections of self-similar and related fractals: a survey of recent developments}

\author{Pablo Shmerkin}
\address{Department of Mathematics and Statistics, Torcuato Di Tella University, and CONICET, Buenos Aires, Argentina}
\email{pshmerkin@utdt.edu}
\thanks{P.S. was partially supported by Projects PICT 2011-0436 and PICT 2013-1393 (ANPCyT)}
\urladdr{http://www.utdt.edu/profesores/pshmerkin}

\keywords{self-similar sets, self-similar measures, projections, Hausdorff dimension, $L^q$ dimensions}

\subjclass[2010]{Primary: 28A78, 28A80, Secondary: 37A99}

\begin{abstract}
In recent years there has been much interest -and progress- in understanding projections of many concrete fractals sets and measures. The general goal is to be able to go beyond general results such as Marstrand's Theorem, and quantify the size of \textit{every} projection - or at least every projection outside some very small set. This article surveys some of these results and the techniques that were developed to obtain them, focusing on linear projections of planar self-similar sets and measures.
\end{abstract}

\maketitle

\section{Introduction}

The study of the relationship between the Hausdorff dimension of a set and that of its linear projections has a long history, dating back to Marstrand's seminal projection theorem \cite{Marstrand54}:

\begin{theorem} \label{thm:Marstrand}
Let $A\subset \mathbb{R}^2$ be a Borel set. Let $\Pi_\alpha$ denote the orthogonal projection onto a line making an angle $\alpha$ with the $x$-axis.
\begin{enumerate}
\item[(i)] If $\hdim A\le 1$, then $\hdim \Pi_\alpha A=\hdim A$ for almost every $\alpha\in [0,\pi)$.
\item[(ii)] If $\hdim A>1$, then $\mathcal{L}(\Pi_\alpha A)>0$ for almost every $\alpha\in [0,\pi)$.
\end{enumerate}
\end{theorem}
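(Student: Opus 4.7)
The plan is to exploit the potential-theoretic characterization of Hausdorff dimension through Riesz energies. Recall that for $s\geq 0$ the $s$-energy of a finite Borel measure $\mu$ on $\mathbb{R}^2$ is
$$I_s(\mu)=\iint |x-y|^{-s}\,d\mu(x)\,d\mu(y),$$
and the combination of Frostman's lemma with the energy formula for Hausdorff dimension yields
$$\hdim A \;=\; \sup\{s\geq 0:\exists\,\mu\in\mathcal{P}(A)\text{ with }I_s(\mu)<\infty\}$$
for any Borel set $A$. Writing $\mu_\alpha$ for the image of $\mu$ under $\Pi_\alpha$, the plan is to control $I_s(\mu_\alpha)$ (for part (i)) and the $L^2$ norm of $\mu_\alpha$ (for part (ii)) \emph{on average} in $\alpha$, and then extract the almost-everywhere conclusion via Fubini.

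For part (i) I would fix $s<\min\{\hdim A,1\}$ and pick $\mu\in\mathcal{P}(A)$ with $I_s(\mu)<\infty$. Swapping the order of integration gives
$$\int_0^\pi I_s(\mu_\alpha)\,d\alpha \;=\; \iint\left(\int_0^\pi\frac{d\alpha}{|\Pi_\alpha(x-y)|^s}\right)d\mu(x)\,d\mu(y).$$
Writing $x-y$ as $|x-y|\,(\cos\theta,\sin\theta)$, the inner integral equals $|x-y|^{-s}\int_0^\pi|\cos(\theta-\alpha)|^{-s}d\alpha$, and the latter is a finite constant $c_s$ precisely because $s<1$. Hence the left-hand side is $\leq c_s\,I_s(\mu)<\infty$, so $I_s(\mu_\alpha)<\infty$ for a.e.\ $\alpha$, and therefore $\hdim\Pi_\alpha A\geq s$ a.e. Letting $s\nearrow\min\{\hdim A,1\}$ along a countable sequence, together with the trivial upper bound $\hdim\Pi_\alpha A\leq\hdim A$ (since $\Pi_\alpha$ is $1$-Lipschitz), delivers (i).

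For part (ii) the inner angular integral diverges at $s=1$, so I would pass to the Fourier side. Fix $1<s<\hdim A$ and $\mu\in\mathcal{P}(A)$ with $I_s(\mu)<\infty$; the classical Fourier representation of the Riesz energy gives $\int|\widehat{\mu}(\xi)|^2|\xi|^{s-2}d\xi<\infty$, and since $s>1$ and $|\widehat{\mu}|\leq 1$ this forces $\int|\widehat{\mu}(\xi)|^2|\xi|^{-1}d\xi<\infty$ as well. Parametrising $\xi=(t\cos\alpha,t\sin\alpha)$ with $t\in\mathbb{R}$ and $\alpha\in[0,\pi)$ (Jacobian $|t|$), and using the identity $\widehat{\mu_\alpha}(t)=\widehat{\mu}(t\cos\alpha,t\sin\alpha)$, one obtains
$$\int_0^\pi\int_{\mathbb{R}}|\widehat{\mu_\alpha}(t)|^2\,dt\,d\alpha \;=\; \int_{\mathbb{R}^2}|\widehat{\mu}(\xi)|^2|\xi|^{-1}d\xi \;<\;\infty.$$
By Plancherel in one variable, $\mu_\alpha\in L^2(\mathbb{R})$ for a.e.\ $\alpha$; such a $\mu_\alpha$ is absolutely continuous with respect to Lebesgue measure, and hence its support $\Pi_\alpha A$ has positive Lebesgue measure.

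The main technical ingredients are Frostman's lemma and the Fourier representation of the Riesz energy, which I would invoke as black boxes. The critical role of the exponent $1$ surfaces twice as essentially the same phenomenon: once as the threshold making $\int_0^\pi|\cos\alpha|^{-s}d\alpha$ finite in part (i), and once as the threshold making $|\xi|^{s-2}$ dominate $|\xi|^{-1}$ at infinity in part (ii). Beyond checking these two convergence conditions, the rest is a careful Fubini-plus-polar-coordinates computation.
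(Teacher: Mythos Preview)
Your argument is correct and is precisely the standard potential-theoretic proof due to Kaufman. Note, however, that the paper does not supply its own proof of this theorem: it is quoted as a classical background result, with a citation to Marstrand's original article and, later in the text, a pointer to \cite[Chapter 9]{Mattila95} for ``the standard potential-theoretic proof of Marstrand's Theorem (due to Kaufman)''. So there is no in-paper proof to compare against; what you have written is exactly the argument the paper alludes to.

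A couple of very minor cosmetic points. In part (ii), when you conclude $\mathcal{L}(\Pi_\alpha A)>0$, it is cleaner to say that $\mu_\alpha$ gives full mass to $\Pi_\alpha A$ (since $\mu(A)=1$), and hence absolute continuity of $\mu_\alpha$ forces $\mathcal{L}(\Pi_\alpha A)>0$; this sidesteps any worry about whether the support of $\mu_\alpha$ is exactly $\Pi_\alpha A$. Also, your deduction that $\int|\widehat{\mu}(\xi)|^2|\xi|^{-1}\,d\xi<\infty$ implicitly splits into $|\xi|\ge 1$ (where $|\xi|^{s-2}\ge|\xi|^{-1}$) and $|\xi|<1$ (where $|\xi|^{-1}$ is locally integrable in $\mathbb{R}^2$); this is routine but worth making explicit.
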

Here $\hdim$ stands for Hausdorff dimension and $\mathcal{L}$ for one-dimensional Lebesgue measure. Note that, in particular,
\[
\hdim(\Pi_\alpha A)=\min(\hdim A,1) \quad\text{for almost all } \alpha\in [0,\pi).
\]

Although Marstrand's Theorem is very general, unfortunately it does not give information about what happens for a \emph{specific} projection, and although the exceptional set is neglibible in the sense of Lebesgue measure, it may still have large Hausdorff dimension. A more recent, and very active, line of research is concerned with gaining a better understanding of the size of projections of sets with some dynamical or arithmetic structure. The goal of this article is to present an overview of this area, focusing on projections of planar self-similar sets and measures (projections of other fractals are briefly discussed in Section \ref{sec:further-results}). For a wider view of the many ramifications of Marstrand's Projection Theorem, the reader is referred to the excellent survey \cite{FFJ14}.

\section{Self-similar sets and their projections}

We review some standard terminology and fix notation along the way. An \textbf{iterated function system} (IFS) on $\R^d$ is a finite collection $\F=( f_i)_{i\in\Lambda}$ of strictly contractive self-maps of $\R^d$. As is well known, for any such IFS $\F$, there exists a unique nonempty compact set $A$ (the \textbf{attractor} or \textbf{invariant set} of $\F$) such that $A=\cup_{i\in\Lambda} f_i(A)$. We will repeatedly make use of the iterated IFS
\[
\F^k = (f_i)_{i\in\Lambda^k},  \quad\text{where }  f_{i_1\cdots i_k} = f_{i_1}\circ\cdots\circ f_{i_k},
\]
which has the same attractor as $\F$. When the maps $f_i$ are similarities, the set $A$ is a \textbf{self-similar set}. From now on the maps $f_i$ will always be assumed to be similarities, unless otherwise noted. For further background on self-similar sets and fractal dimensions, see e.g. \cite{Falconer14}.

Although we will be concerned with projections of self-similar sets, it will be useful to recall some ideas that apply to self-similar sets themselves. The \textbf{similarity dimension} $\sdim(\F)$ of an IFS $\F=(f_i)_{i\in\Lambda}$ is the only positive root $s$ of $\sum_{i\in\Lambda} \rho(f_i)^s=1$, where $\rho(f)$ is the contraction ratio of the similarity $f$. If $A$ is the attractor of $\F$, then there is a natural family of covers of $A$, namely
\[
\{ f_i(A): i\in \Lambda^k\}.
\]
Using these families, one can easily check that $\hdim A\le \sdim(A)$ (we follow a standard abuse of notation and speak of the similarity dimension of a self-similar set whenever the generating IFS is clear from context). Intuitively, it appears that if the sets $f_i(A), i\in\Lambda$ do not overlap much, then these covers should be close to optimal, and one should have an equality $\hdim A=\sdim A$. Recall that the IFS $\F$ satisfies the \textbf{open set condition (OSC)} if there exists a nonempty open set $O$ such that $f_i O\subset O$ for all $i$, and the images $f_i(O)$ are pairwise disjoint. The open set condition ensures that the overlap between the pieces $f_i(A)$ is negligible in a certain sense, and it is well known that Hausdorff and similarity dimensions agree whenever it holds. On the other hand, there are two trivial mechanisms that force the Hausdorff dimension to drop below the similarity dimension:
\begin{enumerate}
\item If $A\subset\R^d$ and $\sdim(A)>d$, then certainly $\hdim(A)\le d<\sdim(A)$.
\item If $f_i=f_j$ for some $i\neq j$, then one can drop $f_j$ from the IFS, resulting in a new generating IFS with strictly smaller similarity dimension. The same happens if two maps of $\F^k$ agree for some $k$, and in turn this happens if and only the semigroup generated by the $f_i$ is not free. In this case we say that $\F$ has an \textbf{exact overlap}.
\end{enumerate}
When the open set condition fails, but there are no exact overlaps, the combinatorial structure of the overlaps is very intricate, and calculating the dimension becomes much more challenging. In dimension $d=1$, a major conjecture in the field is whether these are the \emph{only} possible mechanisms for a drop in the Hausdorff dimension of a self-similar set (in higher dimensions this is false, but there is an analogous, albeit more complicated, conjecture).

We now turn our attention to \emph{projections} of self-similar sets. Let $A$ be a self-similar set generated by $(f_i)_{i\in\Lambda}$. If the similarities $f_i$ are homotheties, i.e. $f_i(x)=\lambda_i x+t_i$ for some contractions $\lambda_i\in (0,1)$ and translations $t_i\in\R^d$, then for any linear map $\Pi: \R^d\to\R^k$, the image $\Pi A$ is also self-similar: it is the attractor of $(\lambda_i x+\Pi t_i)_{i\in\Lambda}$. We note that even if the original self-similar set satisfies the open set condition, their projections need not satisfy it; some of them (albeit only countably many) may have exact overlaps. In general, linear projections of self-similar sets need not be self-similar.

From now on we settle on the case $d=2$, $k=1$.  We will say that a planar IFS $\F$ (or its attractor) is of \textbf{irrational type} if, for some $k$, $\F^k$ contains a map of the form $\lambda R_{\theta}x+t$ with $\theta/\pi$ irrational, where $R_\theta$ is rotation by $\theta$. Otherwise, we say that the IFS is of \textbf{rational type}.  We also say that $\F$ is \textbf{algebraic} if, when representing $f_i(x)=S_i x+t_i$ for a matrix $S_i\in\R^{2\times 2}$ and $t_i\in\R^2$, all the entries of $S_i, t_i$ are algebraic for all $i\in\Lambda$. The following theorem summarizes the current knowledge about the projections of planar self-similar sets.
\begin{theorem}  \label{thm:prof-planar-sss}
 Let $A$ be a planar self-similar set.
 \begin{enumerate}
  \item[(i)] If $A$ is of irrational type, then $\hdim \Pi_\alpha A=\min(\hdim A,1)$ for all $\alpha$.
  \item[(ii)] If $A$ is algebraic, then $\{\alpha:\hdim \Pi_\alpha A< \min(\hdim A,1)\}$ is countable.
  \item[(iii)]  $\hdim \Pi_\alpha A=\min(\hdim A,1)$ for all $\alpha$ outside of a set of zero Hausdorff (and even packing) dimension.
  \item[(iv)] If $\hdim A>1$, then $\hdim\{\alpha:\mathcal{L}(\pi_\alpha A)=0\}=0$.
 \end{enumerate}
\end{theorem}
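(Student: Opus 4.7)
The key idea is a rotational covariance exploiting self-similarity. If $\F^k$ contains a map $g(x) = \lambda R_\theta x + t$ with $\theta/\pi$ irrational, then $A \supseteq g(A)$ gives $\Pi_\alpha A \supseteq \Pi_\alpha g(A)$, and the identity $\Pi_\alpha \circ R_\theta = \Pi_{\alpha-\theta}$ shows that $\Pi_\alpha g(A)$ is a scaled translate of $\Pi_{\alpha-\theta}A$. Hence $F(\alpha) := \hdim \Pi_\alpha A$ satisfies $F(\alpha) \geq F(\alpha-n\theta)$ for every $n$. Marstrand's theorem supplies a full Lebesgue measure set $E \subset [0,\pi)$ on which $F = \min(\hdim A, 1)$; since $\{n\theta \bmod \pi\}$ is equidistributed (Weyl) and $E$ has full measure, the orbit $\{\alpha - n\theta \bmod \pi\}$ must hit $E$. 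This yields $F(\alpha) \geq \min(\hdim A, 1)$, and the trivial reverse inequality completes (i).

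\textbf{Part (ii).} If $A$ is of irrational type, (i) already gives an empty exceptional set. Otherwise, after replacing $\F$ by $\F^N$ for a suitable $N$ and grouping, one reduces to the case where all $f_i$ are homotheties $f_i(x) = \lambda_i x + t_i$ with $\lambda_i, t_i$ algebraic. Then $\Pi_\alpha A$ is the attractor of the one-dimensional IFS $\{\lambda_i x + \langle u_\alpha, t_i\rangle\}_i$ with $u_\alpha = (\cos\alpha,\sin\alpha)$, and Hochman's theorem says that either this projected IFS has an exact overlap or $\hdim \Pi_\alpha A = \min(\sdim\F,1) = \min(\hdim A,1)$. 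Each exact-overlap condition $f_{w,\alpha} = f_{w',\alpha}$ becomes a linear equation $a(w,w')\cos\alpha + b(w,w')\sin\alpha = 0$ with algebraic coefficients, admitting at most one solution in $[0,\pi)$; summing over the countably many word pairs $(w,w')$ gives a countable exceptional set.

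\textbf{Parts (iii) and (iv).} Here the a.e.\ Marstrand statements must be upgraded to a zero packing (respectively Hausdorff) dimension exceptional set. I would attach to $A$ a natural self-similar measure $\mu$ of dimension $\min(\hdim A,1)$ and exploit the convolution structure $\Pi_\alpha \mu = \sum_i \lambda_i^s \, \Pi_\alpha(f_i \mu)$: a drop in $\hdim \Pi_\alpha \mu$ forces a quantitative deficit in its dyadic entropy or $L^q$ dimension, which by Hochman's inverse theorem for the entropy of convolutions (and its $L^q$ refinements due to Shmerkin and coauthors) yields strong structural information on $\alpha$. A Frostman-type counting argument then bounds the packing dimension of the bad set by zero. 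For (iv) one runs the same scheme with ``$\hdim \Pi_\alpha \mu$ maximal'' replaced by ``$\Pi_\alpha \mu \ll \mathcal{L}$'', using that $\hdim \mu > 1$ produces quantitative $L^2$-decay of $\Pi_\alpha \mu$ outside a set of zero Hausdorff dimension in $\alpha$.

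\textbf{Main obstacle.} The real difficulty lies in (iii)--(iv): part (i) is essentially elementary once the covariance trick is in place, and (ii) is a clean application of Hochman's overlap theorem together with algebraicity. In (iii)--(iv) one must run the full entropy/$L^q$-dimension machinery, carefully tracking entropy budgets across many dyadic scales and converting a dimensional drop at a single angle into a strong global obstruction. This technical heart is where most of the recent progress has concentrated, and it is the step I would expect to consume the bulk of the argument.
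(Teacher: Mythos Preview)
Your argument for (i) contains a genuine error. You claim that because the orbit $\{\alpha-n\theta \bmod \pi\}_{n\ge 0}$ is equidistributed and the Marstrand set $E$ has full Lebesgue measure, the orbit must meet $E$. This is false: the orbit is countable, and a set of full measure may omit any prescribed countable set (for instance $E=[0,\pi)\setminus\{\alpha-n\theta:n\in\Z\}$). Weyl equidistribution controls visits to \emph{intervals}, not to arbitrary measurable sets. So the chain $F(\alpha)\ge F(\alpha-\theta)\ge\cdots$ never reaches a point at which you are entitled to invoke Marstrand, and no semicontinuity of $\alpha\mapsto\hdim\Pi_\alpha A$ is available to close the gap. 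This is exactly the obstruction the paper works around: it passes to $L^q$ dimensions of a homogeneous self-similar measure, proves the subadditive cocycle inequality $\varphi_{k+\ell}(\alpha)\le\varphi_k(\alpha)+\varphi_\ell(T^k\alpha)$ for $\varphi_k(\alpha)=\log I_q(\Pi_\alpha\mu,\lambda^k)+C$, and then applies Furman's theorem on subadditive cocycles over uniquely ergodic systems, which is precisely the device that upgrades an a.e.\ limit to a pointwise lower bound at \emph{every} $\alpha$. The reduction from a general $A$ to this homogeneous SSC setting is Lemma~\ref{lem:approx-from-inside}, not merely iterating to $\F^N$.

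For (ii) you misquote Hochman's theorem: a dimension drop forces super-exponential concentration of cylinders (SCC), not an exact overlap. The implication ``SCC $\Rightarrow$ exact overlap'' is the content of Conjecture~\ref{conj:exact-overlap} and is only known when \emph{all} parameters of the one-dimensional IFS are algebraic. But your projected IFS has translations $\langle u_\alpha,t_i\rangle$ involving $\cos\alpha,\sin\alpha$, which are transcendental for all but countably many $\alpha$, so you cannot invoke that case directly. The paper's Theorem~\ref{thm:Furstenberg} fills this gap by a separate argument (using Lemma~\ref{lem:polynomial}) showing that for transcendental slope $u$ SCC is impossible unless the \emph{original} planar IFS has an exact overlap. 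You also write $\min(\sdim\F,1)=\min(\hdim A,1)$, which requires the SSC and is again arranged via Lemma~\ref{lem:approx-from-inside}. Your sketches for (iii)--(iv) are too vague to assess, but note that the paper's route to (iii) is more elementary than you suggest: once Theorem~\ref{thm:hochman} is taken as a black box, the exceptional set lies inside the set of parameters with SCC, and Proposition~\ref{prop:dim-proj-ssm-rational} bounds its packing dimension by a direct covering count. Part (iv) uses an entirely different mechanism: a convolution factorization $\Pi_\alpha\mu=\Pi_\alpha\eta*\Pi_\alpha\nu$ together with power Fourier decay of one factor and full dimension of the other.
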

Hence, without any assumptions, the exceptional set in Marstrand's Theorem for planar self-similar sets has zero Hausdorff dimension (rather than just zero Lebesgue measure).

Part (i) of Theorem \ref{thm:prof-planar-sss} was proved by Peres and the author \cite{PeresShmerkin09}, with a different proof yielding many generalizations obtained later in  \cite{HochmanShmerkin12}. We discuss a different approach in Section \ref{sec:irrational-dim}.

Claims (ii) and (iii) are consequences of some deep recent results of  M. Hochman \cite{Hochman14}. We present their proof, modulo a major result from \cite{Hochman14}, in Section \ref{sec:Hochman}.

The last part, concerning positive Lebesgue measure, was recently obtained by the author and B. Solomyak \cite{ShmerkinSolomyak14}. We will outline the proof in Section \ref{sec:absolute-continuity}.

We will also discuss variants valid for (some) self-similar measures, which in most cases are a necessary step towards the proof of the set statements.

In the algebraic, rational type case, the set of exceptional directions can sometimes be  explicitly determined. In particular, this is the case for the one-dimensional Sierpi\'{n}ski gasket, resolving a conjecture of Furstenberg. See Section \ref{subsec:Furstenberg} below.

We comment on the related natural question of what is the Hausdorff \textit{measure} of $\Pi_\alpha A$ in its dimension. When $\hdim A>1$, a partial answer is provided by Theorem \ref{thm:prof-planar-sss}(iv). When $\hdim A\le 1$, in the irrational type case the answer is zero for all $\alpha$. This was proved by Ero\u{g}lu \cite{Eroglu07} under the   OSC (his result predates \ref{thm:prof-planar-sss}(i); he actually proved that the $\hdim(A)$-Hausdorff measure is zero), and recently extended to the general case by Farkas \cite{Farkas14}.

\section{Dimension and projection theorems for measures}

\subsection{Dimensions of measures}

Even if one is ultimately interested in sets, the most powerful methods for studying dimensions of projections involve measures in a natural way. Since a given set may support many dynamically relevant measures (such as self-similar or Gibbs measures) it is also useful to investigate measures for their own sake.

For sets, in this article we focus mostly on Hausdorff dimension. For measures, there are many notions of dimension which are useful or tractable, depending on the problem under consideration. We quickly review the ones we will need. From now on, by a measure we always mean a Radon measure (that is, locally finite and Borel regular) on some Euclidean space $\R^d$.

Given $x\in\supp\mu$, we define the \textbf{lower and upper local dimensions} of $\mu$ at $x$ as
\begin{align*}
\ldim(\mu,x) &= \liminf_{r\searrow 0}\frac{\log\mu B(x,r)}{\log r},\\
\udim(\mu,x) &= \limsup_{r\searrow 0}\frac{\log\mu B(x,r)}{\log r}.
\end{align*}
If $\ldim(\mu,x)=\udim(\mu,x)$, we write $\dim(\mu,x)$ for the common value and call it \emph{the} local dimension at $x$. Local dimensions are functions; in order to obtain a global quantity, one may look at the $\mu$-essential supremum or infimum of the local dimension. This yields four different notions of dimension, out of which the following two are most relevant for studying the dimension of projections:
\begin{align*}
\dim_*\mu &= \sup\{ s: \ldim(\mu,x)\ge s \quad\text{ for }\mu\text{-almost all } x\}\\
\Dim^*\mu &= \inf\{s:\udim(\mu,x)\le s  \quad\text{ for }\mu\text{-almost all } x\}.
\end{align*}
Note that the supremum and infimum in question are attained. In the literature, $\dim_*$ and $\Dim^*$ are known as the \emph{lower Hausdorff dimension} and \emph{(upper) packing dimension} of a measure, respectively. The terminology stems from the following alternative characterization, which is closely related to the mass distribution principle:
\begin{align*}
\dim_*\mu &= \inf\{\hdim A: \mu(A)>0\},\\
\Dim^*\mu &= \sup\{\pdim A: \mu(\R^d\setminus A)=0\}.
\end{align*}
Here $\pdim$ denotes packing dimension. The measure $\mu$ is called \textbf{exact dimensional} if $\dim_*\mu=\Dim^*\mu$ or, alternatively, if $\dim(\mu,x)$ exists and is $\mu$-a.e. constant. Many dynamically defined measures are exact dimensional, but we note that, in general, a fixed projection of an exact dimensional measure needs not be exact dimensional.

A rather different notion of dimension (or rather, a one parameter family of dimensions) is related to the scaling law of the moments of the measure. Namely, given $q\ge 0, q\neq 1$, write
\begin{align}
I_q(\mu,r) &= \int \mu(B(x,r))^{q-1} \,d\mu(x), \label{eq:def-Iqr}\\
D_q(\mu) &= \liminf_{r\searrow 0} \frac{\log I_{q}(\mu,r)}{(q-1)\log r}.\nonumber
\end{align}
The numbers $D_q$ are known as the $L^q$ dimensions of the measure, and are an essential ingredient of the multifractal formalism. The function $q\mapsto D_q\mu$ is always non-increasing, and
\[
D_q(\mu) \le \dim_*\mu \quad\text{for all } q>1.
\]
We refer to \cite{FLR02} for the proof of these facts, as well as further background on the different notions of dimension of a measure and their relationships. We finish by remarking that the value $q=2$ is particularly significant, and $D_2(\mu)$ is also known as the \textbf{correlation dimension} of $\mu$.

\subsection{Dimensions of self-similar measures} \label{subsec-dim-ssm}

If $\mu$ is a measure on $\R^d$ and $g:\R^d\to\R^k$ is a map, we denote the push-forward of $\mu$ under $g$ by $g\mu$, that is, $g\mu(B)=\mu(g^{-1}B)$ for all Borel sets $B$. If $\mathcal{F}=(f_i)_{i\in\Lambda}$ is an IFS and $p=(p_i)_{i\in\Lambda}$ is a probability vector, then there is a unique Borel probability measure $\mu=\mu(\mathcal{F},p)$ such that
\[
\mu = \sum_{i\in\Lambda} p_i\, f_i\mu.
\]
The measure $\mu$ is called the \textbf{self-similar measure} associated to the IFS $\mathcal{F}$ and the weight $p$. For convenience we always assume that $p_i>0$ for all $i$ (otherwise one may pass to the IFS formed by the maps $(f_i:p_i>0)$). In this case, the topological support of $\mu$ is the self-similar set associated to $\F$.

Self-similar measures are always exact dimensional; this is a rather deep fact which (at least in some special cases) can be traced back to ideas of Ledrappier and Furstenberg; see \cite{FengHu09} for a detailed proof. As is the case for sets, dimensions of self-similar measures are well understood under the open set condition. In this case, one has
\begin{equation} \label{eq:sim-dim-measure}
\dim\mu = \frac{\sum_{i\in\Lambda} p_i \log p_i}{\sum_{i\in\Lambda} p_i \log \rho(f_i)}.
\end{equation}
This is an instance of the heuristic formula ``dimension=entropy/Lyapunov exponent'', which often holds for measures invariant under some kind of conformal dynamics.

Regarding $L^q$ dimensions, under the OSC it holds that $D_q\mu=\tau(q)/(q-1)$, where $\tau(q)$ is the only real solution to
\begin{equation*}
\sum_{i\in\Lambda} p_i^q \rho(f_i)^{-\tau(q)}=1.
\end{equation*}
In the special case where $p_i=\rho(f_i)^s$ (where $s=\sdim(\F)$), it can be easily checked that $\dim\mu=D_q\mu=s$ for all $q>0$. These are called the \textbf{natural weights}.

Just as for sets, the formulae for $\dim\mu$ and $D_q\mu$ given above are expected to ``typically'' hold even in the presence of overlaps. For this reason, we call the right-hand side of \eqref{eq:sim-dim-measure} the \textbf{similarity dimension} of $\mu$, and denote it $\sdim\mu$.

\subsection{Projection theorems for measures}

Theorem \ref{thm:Marstrand} has an analog for various notions of dimension of a measure. The standard potential-theoretic proof of Marstrand's Theorem (due to Kaufman) immediately yields a projection theorem for the correlation dimension. Projection theorems for other notions of dimension of a measure were obtained by  Hu and Taylor \cite[Theorem 6.1]{HuTaylor94}, and Hunt and Kaloshin \cite[Theorem 1.1]{HuntKaloshin97}; we remark that they are still fairly straightforward deductions from the proof of Theorem \ref{thm:Marstrand} as presented in i.e. \cite[Chapter 9]{Mattila95}.

\begin{theorem} \label{thm:Marstrand-measures}
Let $\dim$ denote one of $\dim_*$ or $D_q$ where $q\in (1,2]$,  and let $\mu$ be a measure on $\R^2$. The following holds for almost all $\alpha$:
\begin{enumerate}
\item If $\dim\mu\le 1$, then $\dim \Pi_\alpha\mu =\dim \mu$
\item If $\dim\mu>1$, then $\Pi_\alpha\mu$ is absolutely continuous.
\end{enumerate}
\end{theorem}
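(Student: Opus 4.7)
The plan is to follow the classical potential--theoretic route underlying Kaufman's proof of Marstrand's theorem, adapted to measures. The key point is that both $\dim_*$ and $D_q$ (for $q\in (1,2]$) admit lower bounds in terms of suitable energy integrals, and these energies transform well under linear projections via a Fubini argument based on the elementary estimate
\[
\int_0^\pi |\langle e_\alpha, z\rangle|^{-s}\,d\alpha\lesssim_s |z|^{-s}\quad\text{for } 0<s<1,
\]
which in turn reduces to $\int_0^\pi |\cos\theta|^{-s}\,d\theta<\infty$ when $s<1$. Here $e_\alpha$ is the unit vector in direction $\alpha$.

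For part (1), fix $0<s<\dim\mu$. In the $\dim_*$ case, by a Frostman-type argument (after restricting $\mu$ to a subset of full measure if needed) one obtains finiteness of the $s$-energy $I_s(\mu)=\iint |x-y|^{-s}d\mu(x)d\mu(y)$. In the $D_q$ case, one uses instead the $q$-energy
\[
\mathcal{E}_{s,q}(\mu)=\int \left(\int |x-y|^{-s}\,d\mu(y)\right)^{q-1} d\mu(x),
\]
whose finiteness implies (up to loss of an $\varepsilon$) that $D_q(\mu)\ge s$, as in Hunt--Kaloshin. Integrating $\mathcal{E}_{s,q}(\Pi_\alpha\mu)$ in $\alpha$ and applying Fubini together with the cosine estimate above (the crucial factor $|\cdot|^{-s}$ inside appears linearly in the measure $d\mu(y)$, so Fubini is clean; the outer $(q-1)$-power can be handled by first pulling the angular integral to the innermost place using Minkowski's inequality since $q-1\le 1$) yields $\int_0^\pi \mathcal{E}_{s,q}(\Pi_\alpha\mu)\,d\alpha\lesssim \mathcal{E}_{s,q}(\mu)<\infty$. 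Hence $\dim\Pi_\alpha\mu\ge s$ for a.e. $\alpha$. Letting $s\nearrow\dim\mu$ and combining with the trivial bound $\dim\Pi_\alpha\mu\le\dim\mu$ (valid for $\dim_*$ and $D_q$ since projections are 1-Lipschitz) gives equality.

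For part (2), assume $\dim\mu>1$ and pick $s\in(1,\dim\mu)$ with $I_s(\mu)<\infty$. The cleanest route is Fourier analytic. Since $\widehat{\Pi_\alpha\mu}(\xi)=\widehat\mu(\xi e_\alpha)$, polar coordinates on $\R^2$ yield
\[
\int_0^\pi \|\Pi_\alpha\mu\|_{L^2}^2\,d\alpha=\int_0^\pi\!\!\int_\R |\widehat\mu(\xi e_\alpha)|^2\,d\xi\,d\alpha=c\int_{\R^2}|\widehat\mu(\eta)|^2|\eta|^{-1}\,d\eta,
\]
after interpreting $\|\Pi_\alpha\mu\|_{L^2}$ via Plancherel. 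The standard Fourier--energy identity in $\R^2$ identifies the last integral (up to a constant) with $I_1(\mu)$, which is controlled by $I_s(\mu)$ after splitting the integration into the low-frequency region (where $|\eta|^{-1}$ is integrable against the total mass) and the high-frequency region (where $|\eta|^{-1}\le |\eta|^{s-2}$). Thus $\Pi_\alpha\mu\in L^2(\R)$ for a.e. $\alpha$, and in particular is absolutely continuous.

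The only mildly delicate point is setting up the correct energy characterization of $D_q$ for $q\in (1,2]$ and verifying that the outer $(q-1)$-power does not spoil the Fubini step; this is where one needs the restriction $q\le 2$ (so that Minkowski's integral inequality runs in the favorable direction). Once that bookkeeping is in place, the proof is a direct repetition of the Kaufman argument as laid out in Mattila's book.
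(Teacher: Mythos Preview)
The paper does not actually prove this theorem: it only states it and refers to Hu--Taylor, Hunt--Kaloshin, and Mattila's Chapter~9, remarking that the measure versions are ``fairly straightforward deductions'' from Kaufman's potential-theoretic proof of Marstrand's theorem. Your proposal is precisely an outline of that standard route, so in spirit it agrees exactly with what the paper points to.

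A couple of places where the sketch is a bit loose, which you should tighten if you write this out in full. First, for $\dim_*$, ``restricting $\mu$ to a subset of full measure'' is not quite what is done: one restricts to sets $A_N=\{x:\mu(B(x,r))\le N r^{s'}\ \forall r\le 1\}$, which have positive (not full) $\mu$-measure, and then runs the energy argument on each piece. For part~(1) this is best done pointwise via the potential $\int|x-y|^{-s}d\mu(y)$ and Fubini in $(x,\alpha)$, exactly as in Hu--Taylor; for part~(2) one needs the countable decomposition $\mu=\sum_N\mu|_{A_N}$ and the observation that absolute continuity passes to the sum. In particular, for $\dim=D_q$ with $q<2$ the hypothesis $D_q\mu>1$ does \emph{not} give $I_s(\mu)<\infty$ directly; one uses $D_q\le\dim_*$ (stated just before the theorem in the paper) to reduce to the $\dim_*$ case. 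Second, the step you call ``Minkowski'' in the $D_q$ argument is really Jensen's inequality for the concave function $t\mapsto t^{q-1}$ applied to the normalized $d\alpha$ integral; this is exactly where $q-1\le 1$ enters, as you correctly identify.
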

The theorem fails for $\Dim^*$ and for $D_q$ if $q\notin (1,2]$, see \cite{HuntKaloshin97}. When $D_q\mu>1$, it can be shown that in fact $\Pi_\alpha\mu$ has an $L^q$ density. There is an analogous result valid in higher dimensions.

\section{The irrational case: dimension of projections}
\label{sec:irrational-dim}

\subsection{Projections of some self-similar measures} \label{subsec:proj-irrat-measure}

In this section we discuss the main ideas behind a proof of Theorem \ref{thm:prof-planar-sss}(i). The proof we sketch is based on ideas from \cite{NPS12}, and is a particular case of more general results in \cite{GSSY14}.

For the time being we assume that $f_i(x) = \lambda R_\theta x+t_i$, $i\in\Lambda$, for some $\lambda\in (0,1)$, $\theta\in[0,\pi)$ with $\theta/\pi\notin\mathbb{Q}$, and $t_i\in\R^2$ are translations. In this case, we say that the IFS $(f_i)_{i\in\Lambda}$ is \textbf{homogeneous}. In other words, in a homogeneous IFS, the linear parts are the same for all maps.  Fix a probability vector $(p_i)_{i\in\Lambda}$, and let $\mu$ be the corresponding self-similar measure. The key to our proof of  Theorem \ref{thm:prof-planar-sss}(i) is the following result.

\begin{theorem} \label{thm:L^q-dim-proj}
If $\mu$ is as above, then for any $q\in (1,2]$ and any $\alpha\in [0,2\pi)$,
\[
D_q(\Pi_\alpha \mu)=  \min(D_q\mu,1).
\]
\end{theorem}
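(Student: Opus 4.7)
The plan is to exploit the homogeneous self-similar structure to express $\Pi_\alpha\mu$ as a convolution involving a projection $\Pi_{\alpha - k\theta}\mu$ at a rotated angle, and then to invoke the Marstrand-type theorem for $L^q$-dimensions (Theorem~\ref{thm:Marstrand-measures}) at a direction where it applies. The matching upper bound $D_q(\Pi_\alpha\mu) \le \min(D_q\mu, 1)$ is immediate from the Lipschitzness of $\Pi_\alpha$ combined with the trivial bound $D_q \le 1$ for a measure on $\R$; the work is in the lower bound.

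First I would iterate the self-similar relation. Because all maps $f_i$ with $i \in \Lambda^k$ share the linear part $\lambda^k R_{k\theta}$, a direct calculation using $\Pi_\alpha R_{k\theta} = \Pi_{\alpha - k\theta}$ gives $\Pi_\alpha\circ f_i(x) = \lambda^k \Pi_{\alpha - k\theta}(x) + \Pi_\alpha t_i$, yielding the decomposition
\[
\Pi_\alpha\mu \;=\; \nu_{k,\alpha} \;*\; S_{\lambda^k}(\Pi_{\alpha - k\theta}\mu), \qquad \nu_{k,\alpha} := \sum_{i\in\Lambda^k} p_i\,\delta_{\Pi_\alpha t_i},
\]
where $S_\lambda$ denotes push-forward under $x \mapsto \lambda x$. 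Note that $\nu_{k,\alpha}$ is a probability measure and that $D_q$ is scale-invariant, so $D_q(S_{\lambda^k}\rho) = D_q(\rho)$.

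The key analytic ingredient is the convolution monotonicity: for $q > 1$, any probability measure $\nu$ and finite positive Borel measure $\rho$ on $\R$ satisfy $D_q(\nu * \rho) \ge D_q(\rho)$. To see this, use the equivalence $I_q(\rho, r) \asymp \sum_I \rho(I)^q$, where $I$ ranges over a dyadic partition of $\R$ into intervals of length $r$ (with implicit constants depending only on $q$, and stable under translation of the partition). Writing $(\nu*\rho)(I) = \int \rho(I-y)\,d\nu(y)$ and applying Minkowski's integral inequality to the $\ell^q(I)$-norm of this family yields
\[
\Bigl(\sum_I (\nu*\rho)(I)^q\Bigr)^{1/q} \le \int \Bigl(\sum_I \rho(I-y)^q\Bigr)^{1/q} d\nu(y) \lesssim I_q(\rho,r)^{1/q},
\]
since each translated partition sum is comparable to $I_q(\rho,r)$ uniformly in $y$. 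Taking logarithms and letting $r \to 0$ gives the claim. Applied to the decomposition above, this produces $D_q(\Pi_\alpha\mu) \ge D_q(\Pi_{\alpha - k\theta}\mu)$ for every $k \ge 1$.

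To finish, invoke Theorem~\ref{thm:Marstrand-measures}: there is a set $E \subset [0,\pi)$ of full Lebesgue measure such that $D_q(\Pi_\beta\mu) \ge \min(D_q\mu, 1)$ for $\beta \in E$. Since $\theta/\pi \notin \Q$, Weyl's equidistribution theorem provides infinitely many $k$ with $\alpha - k\theta \bmod \pi \in E$, and any such $k$ closes the argument. The main obstacle is the convolution monotonicity with constants that are \emph{uniform in $k$}, so that the rescaling $S_{\lambda^k}$ -- which shrinks as $k$ grows -- does not degrade into the exponent; this is exactly why the dyadic/Minkowski estimate is preferable to a Fourier-analytic argument (natural for $q=2$) that would typically carry $r$-dependent losses and obstruct the passage to $\liminf_{r \to 0}$.
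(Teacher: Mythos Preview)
Your convolution decomposition $\Pi_\alpha\mu=\nu_{k,\alpha}*S_{\lambda^k}(\Pi_{\alpha-k\theta}\mu)$ is correct, and so is the Minkowski argument giving $I_q(\nu*\rho,r)\lesssim_q I_q(\rho,r)$, hence $D_q(\Pi_\alpha\mu)\ge D_q(\Pi_{\alpha-k\theta}\mu)$ for every $k\ge 1$. The problem is the final step. Weyl equidistribution tells you that the orbit $\{\alpha-k\theta\bmod\pi:k\ge 0\}$ is equidistributed, but equidistribution is a statement about intervals (or Jordan-measurable sets, or continuous test functions); it says nothing about whether a countable orbit meets an \emph{arbitrary} set of full Lebesgue measure. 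Theorem~\ref{thm:Marstrand-measures} gives only a co-null good set $E$, and nothing prevents the null set $E^c$ from containing the entire backward orbit of a fixed $\alpha$. So the chain of inequalities $D_q(\Pi_\alpha\mu)\ge D_q(\Pi_{\alpha-k\theta}\mu)$ never connects $\alpha$ to a direction where Marstrand is known to apply. (Equivalently: you have shown that $g(\beta)=D_q(\Pi_\beta\mu)$ satisfies $g\circ T\ge g$ for the rotation $T$; this forces $g$ to be a.e.\ constant, which you already knew, but does not rule out a countable $T$-backward-invariant set where $g$ is strictly smaller.)

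This gap is exactly what the paper's argument is built to bridge. Instead of passing to the $\liminf$ immediately, one retains the finite-scale inequality
\[
I_q(\Pi_\alpha\mu,\lambda^{k+\ell})\le C_q\,I_q(\Pi_\alpha\mu,\lambda^k)\,I_q(\Pi_{\alpha+k\theta}\mu,\lambda^\ell),
\]
which keeps \emph{both} factors on the right-hand side; your Minkowski bound discards the first one (legitimately, since $I_q\le 1$, but at a cost). In additive form this makes $\varphi_k(\alpha)=\log I_q(\Pi_\alpha\mu,\lambda^k)+\log C_q$ a subadditive cocycle over the uniquely ergodic rotation $T\alpha=\alpha+\theta$. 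Furman's semicontinuity theorem for such cocycles then yields, for \emph{every} $\alpha$, that $\liminf_k \varphi_k(\alpha)/(k(q-1)\log\lambda)$ dominates the almost-sure limit, which equals $\min(D_q\mu,1)$ by Theorem~\ref{thm:Marstrand-measures}. That everywhere-versus-almost-everywhere upgrade is the whole point of the argument, and it genuinely requires the subadditive structure you lost by taking the limit too early.
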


We indicate the main steps in the proof. The first main ingredient is the inequality
\begin{equation} \label{eq:subm}
I_q(\Pi_\alpha\mu,\lambda^{k+\ell}) \le C_q\, I_{q}(\Pi_\alpha \mu,\lambda^k)\, I_{q}(\Pi_{\alpha+k\theta}\mu,\lambda^\ell),
\end{equation}
valid for $q>1$ for some constant $C_q>0$. Recall \eqref{eq:def-Iqr}. This is a consequence of the self-similarity of $\mu$. A result of the same kind, for self-similar (and even self-conformal) measures rather than their projections, was obtained by Peres and Solomyak \cite[Equation (3.2)]{PeresSolomyak00}, and the proof here is similar. The homogeneity of the IFS is key in deriving this inequality.

We can rewrite \eqref{eq:subm} as
\[
\varphi_{k+\ell}(\alpha) \le \varphi_k(\alpha)+\varphi_\ell(T^k\alpha),
\]
where $T$ is the $\theta$ rotation on the circle (identified with $[0,2\pi)$), and
\[
\varphi_k(\alpha)= \log I_{q}(P_\alpha\mu,\lambda^k)+\log C_q.
\]
In other words, $\varphi_k$ is a subadditive cocycle over $T$, which is a uniquely ergodic transformation (this is where the irrationality of $\theta/\pi$ gets used).  A result of Furman \cite[Theorem 1]{Furman97} on subadditive cocycles over uniquely ergodic transformations implies that for \textit{all} $\alpha\in [0,2\pi)$ and almost all $\beta\in [0,2\pi)$,
\[
\liminf_{k\to\infty}\frac{\varphi_k(\alpha)}{k(q-1)\log\lambda} \ge \lim_{k\to\infty}\frac{\varphi_k(\beta)}{k(q-1)\log\lambda}.
\]
The limit in the right-hand side exists and is a.e. constant from general considerations (the subadditive ergodic theorem), but in this case we know it equals $\min(D_q\mu,1)$ by Theorem \ref{thm:Marstrand-measures} (it is easy to see that, in the definition of $D_q$, one can take the limit along the sequence $\lambda^k$). This is the step of the proof that uses that $q\le 2$. It follows that $D_q(\Pi_\alpha\mu) \ge \min(D_q\mu,1)$ for all $\alpha$. The opposite inequality is trivial since $D_q$ does not increase under Lipschitz maps and cannot exceed the dimension of the ambient space. This concludes the sketch of the proof of Theorem \ref{thm:L^q-dim-proj}.

We point out that the analog of Theorem \ref{thm:L^q-dim-proj} holds for arbitrary self-similar measures (of irrational type) in the plane, at the price of replacing $L^q$ dimension by Hausdorff dimension. This is a particular case of \cite[Theorem 1.6]{HochmanShmerkin12}. The problem of whether Theorem \ref{thm:L^q-dim-proj} remains valid in this setting, for any values of $q$, remains open.

\subsection{Conclusion of the proof}

We can now finish the proof of Theorem \ref{thm:prof-planar-sss}(i). If $A$ is a self-similar set for a homogeneous IFS satisfying the open set condition, then we know that the self-similar measure $\mu$ with the natural weights satisfies $D_2\mu=\sdim A=\hdim A$, and hence, by Theorem \ref{thm:L^q-dim-proj},
\[
\hdim(\Pi_\alpha A) \ge D_2\Pi_\alpha\mu = \min(D_2\mu,1) = \min(\hdim(A),1) \ge \hdim(\Pi_\alpha A).
\]
This shows that the claim holds when $A$ has this special structure. To conclude the proof, we show that any self-similar set can be approximated in dimension from inside by such a self-similar set; this essentially goes back to \cite{PeresShmerkin09}. We include the proof, since similar approximation arguments have turned to be useful in a variety of situations, see e.g. \cite[Lemma 3.4]{Orponen12}, \cite[Proposition 1.8]{Farkas14}, and \cite[Section 9]{ShmerkinSuomala14}. Recall that an IFS $(f_i)_{i\in\Lambda}$ with attractor $A$ satisfies the \textbf{strong separation condition (SSC)} if the images $f_i(A)$ are pairwise disjoint (this is stronger than the OSC).

\begin{lemma} \label{lem:approx-from-inside}
Let $A$ be a self-similar set in $\R^2$ with $\hdim A>0$. Then for any $\e>0$ there is a self-similar set $A'$ for a homogeneous IFS satisfying the strong separation condition, such that $A'\subset A$ and $\hdim A' \ge \hdim A-\e$.
\end{lemma}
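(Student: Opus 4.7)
The strategy is a two-stage construction. In Stage 1, I will produce a sub-IFS of some iterate $\F^N$ that satisfies the SSC and whose attractor has Hausdorff dimension within $\e/2$ of $\hdim A$; in Stage 2, I will apply a homogenization step, extracting a homogeneous SSC sub-IFS from a further iterate of the one just obtained, losing at most another $\e/2$.

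For Stage 1, set $s = \hdim A - \e/4$ and invoke Frostman's lemma to produce a Borel probability measure $\mu$ on $A$ with $\mu(B(x,r)) \le C r^s$ for all $x,r$. For $N$ large, $A = \bigcup_{g\in\Lambda^N} f_g(A)$, where each $f_g(A)$ is contained in a ball of radius $\rho(f_g)\diam A$. A Vitali covering argument on these enclosing balls produces $\Gamma \subset \Lambda^N$ such that $\{f_g(A): g\in\Gamma\}$ is pairwise disjoint while the enlarged balls still cover $A$; combining with the Frostman estimate yields $\sum_{g\in\Gamma} \rho(f_g)^s \ge c > 0$, with $c$ independent of $N$. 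Since $\max_{g\in\Gamma}\rho(f_g) \to 0$ as $N \to \infty$, the unique $s'$ with $\sum_{g\in\Gamma}\rho(f_g)^{s'}=1$ satisfies $s' \ge s - \e/4$ once $N$ is large enough; indeed $\sum_g \rho(f_g)^{s-\eta} \ge (\max_g \rho(f_g))^{-\eta} c$, which exceeds $1$ as soon as $\eta \log(1/\max_g \rho(f_g)) \ge \log(1/c)$. Thus $\F_1 := (f_g)_{g\in\Gamma}$ satisfies the SSC, and its attractor $A_1 \subset A$ obeys $\hdim A_1 = \sdim \F_1 \ge \hdim A - \e/2$.

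For Stage 2, write $f_g(x) = \rho_g R_{\theta_g} x + \tau_g$ for $g \in \Gamma$ and let $s_1 = \sdim \F_1$, so $p^*_g := \rho_g^{s_1}$ is a probability vector. For a large integer $M$ and a type $\vec a \in \N^\Gamma$ with $\sum_g a_g = M$ and $\vec a \approx M p^*$, every word in $\Gamma^M$ of type $\vec a$ has common contraction $\lambda = \prod_g \rho_g^{a_g}$ and common rotation $\theta = \sum_g a_g \theta_g$, so the sub-IFS $\F_2$ consisting of all such words is homogeneous. Stirling gives $\approx \exp(M H(p^*))$ such words with $-\log\lambda \approx M\chi(p^*)$, where $\chi(p^*) = -\sum_g p^*_g \log \rho_g$, so $\sdim \F_2 \to H(p^*)/\chi(p^*) = s_1$ as $M \to \infty$. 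Since $\F_1$ satisfies SSC, so does any sub-IFS of $\F_1^M$; hence for $M$ large the attractor $A'$ of $\F_2$ is a subset of $A_1 \subset A$ satisfying $\hdim A' = \sdim \F_2 \ge s_1 - \e/2 \ge \hdim A - \e$. The main technical obstacle is Stage 1 in the absence of OSC: without it, cylinders at finite depth can overlap in complicated ways, so the Vitali-extracted $s$-sum drops to a (typically small) constant $c$, and one must go to sufficiently deep iterates for the logarithmic loss $\log(1/c)/\log(1/\max_g\rho(f_g))$ to be absorbed. Stage 2, by contrast, is a routine Stirling/pigeonhole calculation.
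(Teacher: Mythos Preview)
Your two-stage strategy---extract an SSC sub-IFS, then homogenize---is exactly the paper's. The individual stages use slightly different tools (Frostman plus Vitali rather than a direct ball-packing count in Stage~1; Stirling on the type class near the natural weights rather than a crude pigeonhole over the at most $M^{|\Gamma|}$ possible linear parts in Stage~2), but these are cosmetic differences and your versions work.

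There is, however, a genuine gap in Stage~2. You write $f_g(x)=\rho_g R_{\theta_g}x+\tau_g$, tacitly assuming every map in $\F_1$ is orientation-preserving. Nothing in Stage~1 guarantees this: the original IFS may contain reflections, and these survive into the SSC sub-IFS. When reflections are present the orthogonal parts of the $f_g$ no longer commute (for instance, if $O_1=R_\alpha$ and $O_2$ is reflection across the $x$-axis, then $O_1O_2=R_\alpha O_2$ while $O_2O_1=R_{-\alpha}O_2$), so words in $\Gamma^M$ of a fixed type $\vec a$ need \emph{not} share a common linear part, and the homogenization breaks down. The paper handles this with an explicit intermediate step between your two stages: assuming $\F$ satisfies the SSC and, say, $f_1\in\text{Ref}$, one replaces $\F^k$ by
\[
\F'_k = (g\in\F^k\cap\text{Rot}) \cup (f_1 g: g\in\F^k\cap\text{Ref}),
\]
which consists entirely of orientation-preserving maps, still satisfies the SSC, and has $\sdim(\F'_k)\to s$ as $k\to\infty$. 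Only after this reduction is the commutativity needed for homogenization available. Inserting such a step would complete your argument.
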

\begin{proof}
Write $s=\hdim A$. It is classical that self-similar sets can be approximated in dimension from inside by self-similar sets satisfying the SSC. For completeness we sketch the argument: given $\e>0$, we can find $r>0$ arbitrarily small and a disjoint collection of balls $\{B(x_i,r)\}$ centres in $A$, with at least $r^{\e/2-s}$ elements. Since $x_i\in A$, it is easy to see that for each $i$ there is a word $j_i$ such that $f_{j_i}(A)\subset B(x_i,r)$ and $\rho(f_{j_i}) \ge \delta r$, where $\delta$ is a positive constant that depends only on the IFS. Then $(f_{j_i})$ satisfies the strong separation condition, the attractor $A''$ is contained in $A$, and its similarity dimension (equal to $\hdim A''$) can be made larger than $s-\e$ by taking $r$ small enough. Hence, we may and do assume that $A$ itself already satisfies the SSC.

A similarity $f$ on $\R^2$ can be written as $\lambda O R_\theta+t$, where $\lambda\in (0,1)$, $O$ is either the identity or reflection around the $x$-axis, $R_\theta$ is rotation by angle $\theta$, and $t\in\R^2$ is a translation. Let $\text{Rot}$ denote the similarities with $O$ equal to the identity, and let $\text{Ref}$ be the remaining ones. We claim that $A$ can be approximated from inside by the attractor of an IFS with elements in $\text{Rot}$ (that still satisfies the SSC). To see this, assume without loss of generality that $f_1\in\text{Ref}$. Fix a large integer $k$, and consider the IFS
\[
\F'_k = (g\in\F^k\cap\text{Rot}) \cup (f_1 g: g\in\F^k\cap\text{Ref}).
\]
A calculation shows that $\sdim(\F'_k)$ can be made arbitrarily close to $s$ by taking $k$ large enough, so this is the desired IFS.

Thus, we assume $\F$ satisfies the SSC and $f_i(x)=\lambda_i R_{\theta_i}x+t_i$ for suitable $\lambda_i\in (0,1)$, $\theta_i\in [0,2\pi)$ and $t_i\in\R^2$. Because the similarities $\lambda R_\theta$ commute, if we write $\F^k= (f_i(x)=S_i x+ t_{k,i})_{i\in\Lambda^k}$, then $S_i$ is determined by the number of times each index $\ell\in\Lambda$ appears in $i=(i_1,\ldots,i_k)$, whence there are fewer than $k^{|\Lambda|}$ different possibilities for $S_i$. Hence, there is some fixed similarity $S$, such that the IFS $\F'_k=(S_i x + t_{k,i}: S_i=S)$ satisfies $N \rho(S)^s \ge k^{-|\Lambda|}$, where $N$ is the number of maps in $\mathcal{F}'_k$. On the other hand, $\rho(S) \le \lambda_{\max}^k$, where $\lambda_{\max}=\max_{i\in\Lambda} \lambda_i<1$. Hence, if we write $\sdim(\mathcal{F}'_k)=s-\e_k$, then
\[
1 = N \rho(S)^{s-\e_k} \ge k^{-|\Lambda|} \rho(S)^{-\e_k} \ge k^{-|\Lambda|} \lambda_{\max}^{-\e_k k}.
\]
Thus $(1/\lambda_{\max})^{\e_k k} \le k^{|\Lambda|}$, and therefore $\e_k\to 0$ as $k\to\infty$. Since $\mathcal{F}'_k$ is a homogeneous IFS satisfying the SSC, whose attractor is contained in $A$ (as it is derived from $\F^k$ by deleting some maps), this completes the proof.
\end{proof}

\section{The rational rotation case: Hochman's Theorem on super exponential concentration} \label{sec:Hochman}
\subsection{Hochman's Theorem}

In the introduction we briefly discussed the following conjecture:

\begin{conjecture} \label{conj:exact-overlap}
If $A$ is a self-similar set in $\R$ with $\hdim(A)<\min(\sdim(A),1)$, then $A$ has exact overlaps.
\end{conjecture}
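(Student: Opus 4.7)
The plan is to reduce this conjecture to Hochman's super-exponential concentration theorem, which is the main tool reviewed in the rest of this section. Writing each generator as $f_i(x) = \lambda_i x + t_i$ on the line, define, for each generation $n$,
\[
\Delta_n \;=\; \min_{\substack{i,j \in \Lambda^n \\ i \neq j}} \bigl( |\lambda_i - \lambda_j| + |t_i - t_j| \bigr),
\]
a quantity measuring how close any two distinct cylinder maps of generation $n$ are in parameter space. Hochman's theorem states, roughly, that if $\Delta_n$ fails to decay super-exponentially in $n$ (meaning $-\log \Delta_n/n \not\to \infty$), then $\hdim A = \min(\sdim A, 1)$. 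Taking this on faith, the conjecture is equivalent to a purely combinatorial/number-theoretic implication: absence of exact overlaps forces $\Delta_n$ not to be super-exponentially small.

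First I would verify the trivial half of this equivalence: two affine maps on $\R$ coincide exactly when both their linear and translation parts do, so $\Delta_n = 0$ for some $n$ is precisely the statement that $\F$ has an exact overlap. Thus ``no exact overlaps'' amounts to $\Delta_n > 0$ for every $n$, and the content of the conjecture is the quantitative strengthening from positivity to a lower bound $\Delta_n \ge e^{-\omega(n)\,n}$ for every $\omega(n)\to\infty$; even a plain exponential bound $\Delta_n \ge c^n$ would suffice.

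In the algebraic case — the one-dimensional analogue of Theorem~\ref{thm:prof-planar-sss}(ii) — the parameters $\lambda_i, t_i$ lie in a fixed number field, and a Liouville-type estimate yields $\Delta_n \ge C^{-n}$ whenever $\Delta_n \neq 0$, since the differences $t_i-t_j$ and $\lambda_i-\lambda_j$ are algebraic numbers of height and degree controlled exponentially in $n$. This is how Hochman resolves the conjecture under the algebraic hypothesis, and I would first reproduce this step to dispose of that regime cleanly. The real obstacle is the transcendental regime, where no diophantine control is available a priori. A plausible attack is by contradiction: assume $\Delta_{n_k} \le e^{-\omega(n_k) n_k}$ along an offending sequence of pairs $(i_k, j_k)\in\Lambda^{n_k}$, and try a renormalization or amplification scheme that concatenates these near-coincidences across generations to produce, in the limit, an exact coincidence. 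A complementary line would be to strengthen Hochman's theorem itself, replacing the ``no super-exponential concentration'' hypothesis by a structural property — e.g. a lower bound on the entropy of projected cylinder measures — that can be extracted directly from the assumption that the semigroup generated by the $f_i$ is free.

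The main obstacle, in my view, is precisely this transcendental case: the differences $t_i - t_j$ form specific two-parameter exponential sums in the contraction ratios, and in the absence of a number-field structure there is no reason such sums cannot accumulate to zero arbitrarily fast without ever vanishing. Any honest progress on the full conjecture will almost certainly require either new tools for controlling such exponential sums, or a qualitative enhancement of Hochman's theorem that bypasses the need to quantify $\Delta_n$ at all.
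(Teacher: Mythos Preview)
This statement is labelled a \emph{Conjecture} in the paper, and the paper does not prove it. Immediately after stating it the author writes that ``a full solution to the conjecture seems to be beyond reach of current methods,'' and then presents Hochman's Theorem~\ref{thm:hochman} as the best known partial result. So there is no proof in the paper for your proposal to be compared against.

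Your writeup is not a proof either, and to your credit you do not pretend otherwise: you correctly identify that, modulo Hochman's theorem, what remains is the diophantine implication ``no exact overlaps $\Rightarrow$ no super-exponential concentration,'' you dispose of the algebraic case via the Liouville-type bound (this is exactly Lemma~\ref{lem:polynomial} in the paper), and you isolate the transcendental case as the genuine obstruction. One small inaccuracy: you call the conjecture \emph{equivalent} to that diophantine implication, but it is only \emph{implied} by it. Hochman's theorem is a one-way statement (dimension drop $\Rightarrow$ SCC), so in principle there could exist an IFS with SCC, no exact overlaps, and yet full Hausdorff dimension; the conjecture would survive such an example while your ``equivalent'' reformulation would not. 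Apart from that, your discussion accurately reflects the state of the problem as the paper presents it, but it is a research outline, not a proof.
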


Although a full solution to the conjecture seems to be beyond reach of current methods, a major breakthrough was recently achieved by M. Hochman \cite{Hochman14}. Hochman proved a weaker form of the conjecture, which allows to establish the full conjecture in a number of important special cases. In order to state his result, we need some definitions. The \textbf{separation constant} of an IFS $\F=( \lambda_i x + t_i)_{i\in\Lambda}$ on the real line is defined as
\[
\Delta(\F) = \left\{
\begin{array}{ccc}
  \infty & \text{if }  \lambda_i\neq\lambda_j \text{ for all }i\neq j, \\
  \min_{i\neq j}\{ |t_i-t_j|:\lambda_i=\lambda_j\} & \text{ otherwise}
\end{array} \right..
\]
Although $\Delta(\F)$ may be infinite, we will only be interested in $\Delta(\F^k)$ for large values of $k$, and this is always finite (already for $k=2$) due to the commutativity of the contraction ratios. The sequence $k\mapsto \Delta(\F^k)$ is always decreasing. Notice also that there is an exact overlap if and only if $\Delta(\F^k)=0$ for some (and hence all sufficiently large) $k$. On the other hand, by pigeonholing it is easy to see that $\Delta(\F^k)$ decays at least exponentially fast in $k$. We say that $\mathcal{F}$ has \textbf{superexponential concentration of cylinders (SCC)} if $\Delta(\F^k)$ decays at superexponential speed or, in other words, if
\[
\lim_{k\to\infty} \frac{-\log\Delta(\F^k)}{k} = \infty.
\]
We can now state Hochman's Theorem:

\begin{theorem} \label{thm:hochman}
If $\mu=\mu(\F,p)$ is a self-similar measure in $\R$ with $\hdim \mu<\min(\sdim\mu,1)$, then $\F$ has super-exponential concentration of cylinders.

In particular, if $A$ is a self-similar set in $\R$ with $\hdim(A)<\min(\sdim(A),1)$, then (the IFS generating) $A$ has super-exponential concentration of cylinders.
\end{theorem}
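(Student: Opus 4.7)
My plan is to prove the measure statement first; the set statement then follows by taking the natural weights $p_i = \rho(f_i)^s$ with $s = \sdim A$, which produces a self-similar measure $\mu$ satisfying $\sdim \mu = s$ and $\hdim \mu \le \hdim A < \min(s, 1)$. The approach is entropy-theoretic throughout. Write $H_n(\nu) = n^{-1} H(\nu, \mathcal{D}_n)$ for the normalized dyadic entropy at scale $2^{-n}$, and recall that for exact-dimensional measures -- hence for self-similar measures by Feng--Hu -- the entropy dimension $h(\mu) := \lim_n H_n(\mu)$ coincides with $\hdim \mu$.

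The first step is to use self-similarity to compare $h(\mu)$ with $\sdim \mu$. In the homogeneous case $\lambda_i \equiv \lambda$, iterating to level $k$ yields the exact convolution identity
\[
\mu = \mu^{(k)} \ast S_{\lambda^k}\mu, \qquad \mu^{(k)} = \sum_{\mathbf{i} \in \Lambda^k} p^{(k)}_{\mathbf{i}}\, \delta_{t^{(k)}_{\mathbf{i}}},
\]
where $S_t$ denotes the scaling $x \mapsto tx$. A routine entropy inequality applied to this convolution recovers the well-known bound $h(\mu) \le \min(\sdim \mu, 1)$; the theorem's hypothesis is precisely that this bound is strict. This yields a uniform \emph{entropy deficit} $\delta > 0$: the entropy of the level-$k$ convolution at the natural scale falls short of its additive prediction by at least $\delta k$ for all large $k$.

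The decisive input is Hochman's \emph{inverse theorem for entropy of convolutions}: if $H(\nu \ast \theta, \mathcal{D}_n)$ is substantially smaller than the additive prediction $H(\nu, \mathcal{D}_n) + H(\theta, \mathcal{D}_n)$ (appropriately normalized for the scales of the factors), then at a positive-density set of intermediate scales $m \le n$ either $\nu$ is nearly atomic at scale $2^{-m}$ (most mass in very few $\mathcal{D}_m$-cells) or $\theta$ is nearly uniform (conditional measures close to Lebesgue on $\mathcal{D}_m$-cells). Applying this to $\mu = \mu^{(k)} \ast S_{\lambda^k}\mu$: since $\mu$ itself is a fixed self-similar measure of positive dimension strictly less than $1$, the scaled copy $S_{\lambda^k}\mu$ cannot be uniform at a positive-density set of scales, so the atomicity must land on $\mu^{(k)}$. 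Quantitatively, for every fixed $N$ and all sufficiently large $k$, the atoms of $\mu^{(k)}$ are supported on $o(|\Lambda|^k)$ many $\mathcal{D}_{Nk}$-cells. The non-homogeneous case is reduced to this by first binning $\Lambda^k$ into classes of equal contraction ratio (only polynomially many such classes in $k$) and applying the argument class by class.

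Pigeonholing the $|\Lambda|^k$ atoms of $\mu^{(k)}$ into $o(|\Lambda|^k)$ cells of length $2^{-Nk}$ forces distinct $\mathbf{i}, \mathbf{j} \in \Lambda^k$ to fall in a common cell; restricting to a sub-class of equal contraction yields $\lambda^{(k)}_{\mathbf{i}} = \lambda^{(k)}_{\mathbf{j}}$ and $|t^{(k)}_{\mathbf{i}} - t^{(k)}_{\mathbf{j}}| \le 2^{-Nk}$, so $\Delta(\F^k) \le 2^{-Nk}$. Since $N$ is arbitrary, this is exactly the SCC conclusion. The main obstacle is unquestionably the inverse theorem itself: it is a delicate multiscale result, proved in \cite{Hochman14} via a careful component-wise decomposition of $\nu$ and $\theta$ and a dichotomy asserting that at each scale, one of the factors must be either atomic or uniform. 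I would take this as a black box in the write-up above.
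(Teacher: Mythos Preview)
The paper does not prove this theorem at all: it is stated as a black box from \cite{Hochman14}, with only the remark that ``[t]he proof of this result combines several major new ideas. A key ingredient is an inverse theorem for the growth of entropy under convolutions \ldots''. So there is no ``paper's own proof'' to compare against; you have gone well beyond the survey by sketching the outline of Hochman's original argument.

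Your outline is broadly faithful to \cite{Hochman14}: the reduction of the set case to the measure case via the natural weights is correct (and is how Hochman does it), the convolution identity $\mu=\mu^{(k)}*S_{\lambda^k}\mu$ in the homogeneous case is the right starting point, and identifying the inverse theorem for entropy of convolutions as the decisive input is accurate. One point where your sketch is somewhat too glib is the passage from the inverse theorem to the concentration of $\mu^{(k)}$ at scale $2^{-Nk}$. The inverse theorem gives a \emph{multiscale} dichotomy (at a positive-density set of intermediate scales, one factor is approximately atomic or the other approximately uniform, in the sense of components), not a single-scale concentration statement; extracting from this that $\mu^{(k)}$ has two distinct atoms within $2^{-Nk}$ of each other requires a further argument exploiting that $\mu$ has a fixed entropy profile across scales (because it is self-similar with $\hdim\mu\in(0,1)$), and the handling of the non-homogeneous case in \cite{Hochman14} is more delicate than ``binning into polynomially many classes''. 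But as a high-level map of the proof, with the inverse theorem taken as a black box, your proposal is sound and matches the strategy the survey alludes to.
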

The proof of this result combines several major new ideas. A key ingredient is an inverse theorem for the growth of entropy under convolutions, which belongs to the field of additive combinatorics. See the survey \cite{Hochman14b} or the introduction of \cite{Hochman14} for an exposition of the main ideas in the proof.

In the remainder of this section, we explain how to apply Theorem \ref{thm:hochman} to the calculation of the dimension of projections of planar self-similar sets and measures, in the rational rotation case. In turn, this will be a key ingredient for establishing absolute continuity of projections. For many other applications of Theorem \ref{thm:hochman}, see \cite{Orponen13, FraserShmerkin14, Shmerkin14, ShmerkinSolomyak14} in addition to \cite{Hochman14}.

\subsection{Projections of the one-dimensional Sierpi\'{n}ski Gasket, and Theorem \ref{thm:prof-planar-sss}(ii)}
\label{subsec:Furstenberg}

\begin{figure}
   \centering
  \includegraphics[width=0.8\textwidth]{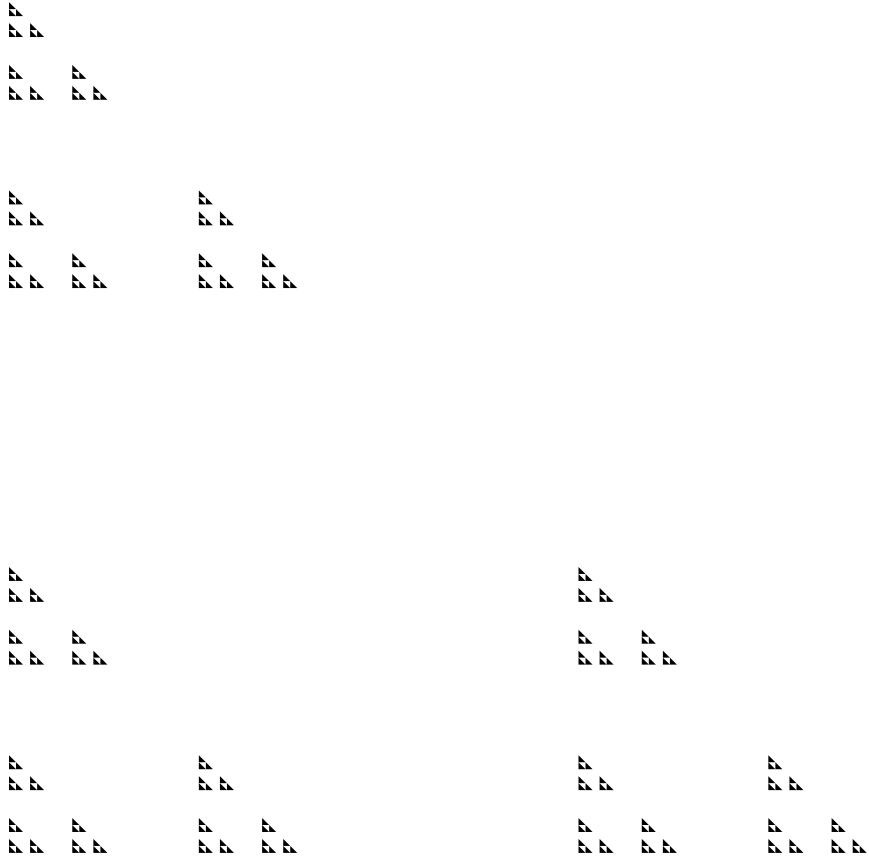}
\caption{The one-dimensional Sierpi\'{n}ski gasket}
\label{fig:gasket}
\end{figure}

The attractor $S$ of the planar IFS $\left( (\tfrac{x}{3},\tfrac{y}{3}), (\tfrac{x+1}{3},\tfrac{y}{3}),(\tfrac{x}{3},\tfrac{y+1}{3})\right)$ is known as the \textbf{one-dimensional Sierpi\'{n}ski Gasket}, see Figure $1$. Since $S$ satisfies the SSC, indeed $\hdim(S)=1$. Because the generating IFS has no rotations, the orthogonal projections of $S$ onto lines are again self-similar sets. Let $P_u(x,y)=x+uy$. Then $S_u:= P_u S$ is homothetic to $\Pi_{\tan^{-1}u} S$. This provides a smooth reparametrization of the orthogonal projections of $S$, which has the advantage that $S_u$ is the attractor of the simpler IFS $\left( \tfrac{x}{3},\tfrac{x+1}{3},\tfrac{x+u}{3}\right)$.

It is clear that $S_u$ has an exact overlap for some values of $u$, for example, for $u=1$. Kenyon showed that there is an exact overlap if and only if $u=p/q$ in lowest terms with $p+q\not\equiv 0\bmod 3$, see \cite[Lemma 6]{Kenyon97}, and provided an expression for the dimension of $S_u$ in this case. Hence, unlike the irrational rotation case, there are exceptional directions, and an infinite number of them. Kenyon rounded off the understanding of projections with rational slope by showing that if $u=p/q$ in lowest terms with $p+q\equiv 0\bmod 3$, then $S_u$ has positive Lebesgue measure (in particular, dimension $1$).

An old (unpublished) conjecture of H. Furstenberg states that $\dim S_u=1$ for all \textit{irrational} $u$. Since $S_u$ has no exact overlap for $u$ irrational, this is a particular case of Conjecture \ref{conj:exact-overlap}. In the same article \cite{Kenyon97}, Kenyon proved that $S_u$ has Lebesgue measure zero for all irrational $u$ (answering a question of Odlyzko), and exhibited a dense $G_\delta$ set of irrational $u$ such that $\hdim S_u=1$. It turns out that a positive solution to Furstenberg's conjecture follows rather easily from Theorem \ref{thm:hochman} (the short deduction is due to Solomyak and the author). The argument is presented in \cite[Theorem 1.6]{Hochman14} just for projections of the one-dimensional Sierpi\'{n}ski gasket. A variant of the proof yields the following more general result.

\begin{theorem} \label{thm:Furstenberg}
Let $\lambda\in (0,1)$ be algebraic, and let $a_i,b_i$ ($i\in\Lambda$) also be algebraic. Suppose that the IFS
\[
\F=\big(\lambda(x+a_i),\lambda(y+b_i)\big)_{i\in\Lambda}
\]
does not have an exact overlap. Write $s=\sdim(\F)=\log |\Lambda|/\log (1/\lambda)$. Let $S$ be the attractor of $\F$, and let $S_u$ be the image of $S$ under $(x,y)\mapsto x+uy$.

Then $\hdim S_u=\min(s,1)$ for all $u$ such that the $S_u$ does not have an exact overlap, and in particular for all but countably many $u$.
\end{theorem}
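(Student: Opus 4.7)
The plan is to assume for contradiction that $\hdim S_u<\min(s,1)$, extract super-exponential concentration of cylinders from Theorem \ref{thm:hochman}, and then reach a contradiction via Liouville-type height bounds inside a fixed number field. I would first pass from $S_u$ to a natural measure. Observe that $S_u$ is the attractor of the homogeneous one-dimensional IFS
\[
\F_u=\bigl(\lambda t+\lambda(a_i+u b_i)\bigr)_{i\in\Lambda},
\]
whose similarity dimension equals $s$ whenever $\F_u$ has no exact overlaps. Let $\mu_u$ be the self-similar measure for $\F_u$ with uniform weights $p_i=1/|\Lambda|$, so that $\sdim\mu_u=s$ and, by exact-dimensionality, $\dim\mu_u\le\hdim S_u$. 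If the theorem fails, then $\dim\mu_u<\min(\sdim\mu_u,1)$, so Theorem \ref{thm:hochman} guarantees that $\F_u$ has SCC.

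Next I would expand the cylinders algebraically. Iterating gives $f_{\mathbf{i}}(t)=\lambda^k t+T^1_{\mathbf{i}}+uT^2_{\mathbf{i}}$ with $T^1_{\mathbf{i}}=\sum_{\ell=1}^k\lambda^\ell a_{i_\ell}$ and $T^2_{\mathbf{i}}=\sum_{\ell=1}^k\lambda^\ell b_{i_\ell}$; both lie in the fixed number field $K=\Q(\lambda,\{a_i\},\{b_i\})$. The pairwise differences $D^j_{\mathbf{i},\mathbf{j}}:=T^j_{\mathbf{i}}-T^j_{\mathbf{j}}$ also lie in $K$, and standard height inequalities give the bound $H(D^j_{\mathbf{i},\mathbf{j}})\le C^k$ on the absolute multiplicative Weil height, with $C$ depending only on $\F$. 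The no-exact-overlap hypothesis on $S_u$ reads $D^1+uD^2\ne 0$ for every $\mathbf{i}\ne\mathbf{j}$, while the analogous hypothesis on $S$ reads $(D^1,D^2)\ne(0,0)$.

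The algebraic obstruction to SCC now splits by cases. If $u$ is algebraic, then $D^1+uD^2$ is a nonzero element of the fixed number field $K(u)$ of height at most $(C')^k$, so Liouville's inequality yields $|D^1+uD^2|\ge c\,e^{-\kappa k}$, contradicting SCC. If $u$ is transcendental, then $D^1+uD^2$ is automatically nonzero by the hypothesis on $S$. Along any subsequence of $k$'s on which the minimising pair has $D^2=0$, Liouville in $K$ gives $|D^1|\ge c\,e^{-\kappa k}$; along a subsequence with $D^2\ne 0$, I would write $\Delta(\F_u^k)=|D^2|\cdot|u-\alpha_k|$ with $\alpha_k:=-D^1/D^2\in K$ of height at most $(C'')^k$, invoke Liouville once more on $|D^2|$, and conclude that SCC would force $|u-\alpha_k|$ to decay super-exponentially. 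But distinct elements of $K$ of height $\le H$ are separated by at least $cH^{-\gamma}$ (Liouville again), so the $\alpha_k$ must stabilise along a subsequence, forcing $u\in K$ and contradicting transcendence. The countability assertion at the end is immediate: for each fixed pair $\mathbf{i}\ne\mathbf{j}$, the equation $D^1+uD^2=0$ admits at most one $u$, and there are countably many pairs.

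The main obstacle I expect is the effective bookkeeping of the Liouville estimates with constants uniform in $k$ and in the choice of minimising pair. In particular, the transcendental case relies on the dichotomy that a real number super-exponentially approximable by elements of a fixed number field of only exponential height must already lie in that field --- a soft consequence of the Liouville separation inside $K$, but one that must be set up carefully. Otherwise the proof runs parallel to Hochman's treatment of the one-dimensional Sierpi\'{n}ski gasket in \cite[Theorem 1.6]{Hochman14}, with the two-dimensional structure of $S$ entering only through the linear form $T^1_{\mathbf{i}}+uT^2_{\mathbf{i}}$.
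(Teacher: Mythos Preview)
Your proposal is correct and follows essentially the same route as the paper's proof: both dispatch the algebraic-$u$ case directly via a Liouville-type bound (the paper's Lemma~\ref{lem:polynomial} is precisely such a bound for polynomial expressions in a fixed finite set of algebraic numbers), and in the transcendental case both reduce to showing that the ratios $\alpha_k=-D^1_k/D^2_k$ must eventually stabilise---the paper via the auxiliary quantity $z_k=x_k y_{k+1}-x_{k+1}y_k\in\mathcal{P}_{2k+3}(B)$, you via the Liouville separation of height-bounded elements of $K$, which are two ways of saying the same thing since $\alpha_k=\alpha_{k+1}$ iff $z_k=0$. The only cosmetic difference is that you work throughout with Weil heights whereas the paper uses the more hands-on Lemma~\ref{lem:polynomial}; your phrase ``stabilise along a subsequence'' is a slight slip, since your argument actually gives $\alpha_k=\alpha_{k+1}$ for \emph{all} large $k$, which is what is needed.
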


In the proof we will need the following lemma, see \cite[Lemma 5.10]{Hochman14} for the proof. Given a finite set $B$, the family of polynomial expressions in elements of $B$ of degree at most $k$ will be denoted $\mathcal{P}_k(B)$.
\begin{lemma} \label{lem:polynomial}
Let $B$ be a finite set of algebraic numbers. There is a constant $\delta=\delta(B)>0$ such that if $x\in\mathcal{P}_k(B)$, then either $x=0$ or $|x|\ge \delta^k$.
\end{lemma}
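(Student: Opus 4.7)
The plan is to prove this via a standard Liouville/number-theoretic argument: any nonzero element of $\mathcal{P}_k(B)$ lies in the fixed number field $K=\mathbb{Q}(B)$, and one bounds it below using the fact that the norm $N_{K/\mathbb{Q}}(\cdot)$ of a nonzero algebraic integer is a nonzero rational integer. The only thing that depends on $k$ is how large its Galois conjugates can be, and those grow at most exponentially since $x$ is a degree-$k$ polynomial expression.

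First, I would set up the algebraic framework. Let $K=\mathbb{Q}(\beta_1,\dots,\beta_n)$ where $B=\{\beta_1,\dots,\beta_n\}$, and let $d=[K:\mathbb{Q}]<\infty$. List the distinct field embeddings $\sigma_1=\mathrm{id},\sigma_2,\dots,\sigma_d\colon K\hookrightarrow\mathbb{C}$. Choose an integer $D\in\mathbb{Z}_{>0}$, depending only on $B$, such that $D\beta_i$ is an algebraic integer for every $i$; this exists because each $\beta_i$ is algebraic. Finally, set $M=\max_{i,j}|\sigma_j(\beta_i)|$, which is finite and depends only on $B$.

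Next, given $x\in\mathcal{P}_k(B)$ with $x\neq 0$, write $x=P(\beta_1,\dots,\beta_n)$ with $P\in\mathbb{Z}[X_1,\dots,X_n]$ of total degree at most $k$ and coefficients bounded by $H^k$ for some constant $H=H(B)$ absorbing the admissible growth of the coefficients in the definition of $\mathcal{P}_k(B)$. Consider $y=D^k x$; since $D^k P(\beta_1,\dots,\beta_n)=P(D\beta_1,\dots,D\beta_n)\cdot D^{k-\deg P}\cdot(\text{integer})$, one checks $y$ is a nonzero algebraic integer in $K$. Hence $N_{K/\mathbb{Q}}(y)=\prod_{j=1}^d\sigma_j(y)\in\mathbb{Z}\setminus\{0\}$, so $|N_{K/\mathbb{Q}}(y)|\geq 1$.

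Then I would estimate each conjugate. For $j\geq 2$, $\sigma_j(y)=D^k P(\sigma_j(\beta_1),\dots,\sigma_j(\beta_n))$ is a sum of at most $(k+1)^n$ monomials, each of absolute value at most $H^k\cdot D^k\cdot M^k$. Hence $|\sigma_j(y)|\leq (k+1)^n(HDM)^k\leq C_1^k$ for some $C_1=C_1(B)$ and all $k$. Combining,
\[
|y|=\frac{|N_{K/\mathbb{Q}}(y)|}{\prod_{j=2}^d|\sigma_j(y)|}\geq C_1^{-k(d-1)},
\]
so $|x|=|y|/D^k\geq (C_1^{d-1}D)^{-k}=:\delta^k$, with $\delta=\delta(B)>0$ depending only on $B$. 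Taking $\delta$ possibly smaller handles finitely many small $k$ uniformly.

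The main obstacle is just bookkeeping: one has to verify that $D^k x$ is genuinely an algebraic integer after clearing the at-most-$k$ denominators in each monomial, and that the implicit growth allowed in the coefficients of a ``polynomial expression of degree at most $k$'' is indeed at most $C^k$ (otherwise the lemma would fail, as the example $x=1/N^k$ illustrates). Once the definitional conventions are fixed, the rest is elementary Galois theory and the observation that Galois conjugates of a bounded-degree polynomial expression are themselves bounded-degree polynomial expressions in the conjugates of the $\beta_i$.
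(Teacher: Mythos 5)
Your argument is correct and is essentially the standard one: the paper itself defers to \cite[Lemma 5.10]{Hochman14}, whose proof (and the single-variable variant sketched in an earlier version of this survey) is exactly this Liouville-type bound --- clear denominators to get an algebraic integer, use that its field norm is a nonzero rational integer, and bound the remaining conjugates exponentially in $k$. The only quibble is your closing aside: $x=1/N^k$ is not actually a counterexample to unbounded coefficients (it already satisfies $|x|\ge\delta^k$); the genuine obstruction is Liouville-type rational approximation with huge integer coefficients, but this side remark does not affect the validity of the proof.
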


\begin{proof}[Proof of Theorem \ref{thm:Furstenberg}]
The projection $S_u$ is the attractor of
\[
\F_u =\big(\lambda(x+a_i+u b_i)\big)_{i\in\Lambda}.
\]
If $u$ is algebraic, then $\F_u$ is algebraic (i.e. all the parameters are algebraic). As shown in \cite[Theorem 1.5]{Hochman14}, it follows easily from Theorem \ref{thm:hochman} and Lemma \ref{lem:polynomial} that in this case $\hdim S_u=\min(\sdim S_u,1)\ge \min(s,1)$ if $\F_u$ does not have an exact overlap.  Hence from now on we assume that $u$ is transcendental.

The IFS $\F_u^k$ is given by $(\lambda^k x+\sum_{\ell=1}^k \lambda^\ell(a_{i_\ell}+u b_{i_\ell}))_{i\in\Lambda^k}$. In particular, there are $i\neq j\in\Lambda^k$ such that
\[
\Delta(\F_u^k) = x_k +u y_k,
\]
where $x_k=\sum_{\ell=1}^k \lambda^\ell(a_{i_\ell}-a_{j_\ell})$, $y_k=\sum_{\ell=1}^k \lambda^\ell(b_{i_\ell}-b_{j_\ell})$. Since $u$ is transcendental, this can be zero only if $x_k$ and $y_k$ are both zero, but in this case $\F^k$ can be seen to have an exact overlap, which contradicts our hypothesis. Hence for each $k$ either $x_k\neq 0$ or $y_k\neq 0$.

Note that $x_k,y_k\in\mathcal{P}_{k+1}(B)$, where $B=\{ \lambda, a_i-a_j, b_i-b_j:i,j\in\Lambda\}$. Let $\delta=\delta(B)$ be the number given by Lemma \ref{lem:polynomial}. If $x_k=0$ or $y_k=0$, then $|\Delta(\F_u^k)| \ge \min(1,|u|)\delta^{k+1}$. If this happens for infinitely many $k$ then, in light of Theorem \ref{thm:hochman}, $\hdim S_u=\min(\hdim S,1)$ and we are done. Hence we may assume that $x_k y_k\neq 0$ for all $k\ge k_0$.

For any $k\ge k_0$, we hence have
\[
\left|\frac{\Delta(\F_u^k)}{y_k}-\frac{\Delta(\F_u^{k+1})}{y_{k+1}}\right| = \left|\frac{x_k}{y_k}-\frac{x_{k+1}}{y_{k+1}}\right| = \left|\frac{z_k}{y_k y_{k+1}}\right|,
\]
where $z_k\in\cP_{2k+3}(B)$. Therefore, Lemma \ref{lem:polynomial} yields that either $z_k=0$ or $|z_k|\ge \delta^{2k+3}$. Assume first that $z_k=0$ for all sufficiently large $k$, say for all $k\ge k_1\ge k_0$. Then
\[
 |\Delta(F_u^k)| = |y_k (x_{k_1}/y_{k_1}+u)| \ge |x_{k_1}/y_{k_1}+u| \delta^{k+1}\quad\text{for all }k\ge k_1,
\]
so there is no SCC and the conclusion follows again from Theorem \ref{thm:hochman}. Thus, we may and do assume that $z_k\neq 0$ for infinitely many $k$. For any such $k$, since $|y_k|$ is bounded uniformly in $k$,  we conclude that either
\[
|\Delta(\F_u^k)|\ge c \delta^{2k+3}, \quad\text{or}\quad |\Delta(\F_u^{k+1})|\ge c \delta^{2k+3},
\]
for some $c>0$ independent of $k$. This shows that also in this case there is no SCC, so a final application of Theorem \ref{thm:hochman} finishes the proof.
\end{proof}
The same proof works for self-similar measures for $\F_u$.

Part (ii) of Theorem \ref{thm:prof-planar-sss} follows from Theorem \ref{thm:Furstenberg} and Lemma \ref{lem:approx-from-inside}: if $A$ is of irrational type there is nothing to do by part (i). If $A$ is algebraic and of rational type then, given $\e>0$, Lemma \ref{lem:approx-from-inside} provides us with an IFS $\F$ satisfying the hypotheses of Proposition Theorem \ref{thm:Furstenberg} such that $\sdim\F>\hdim A-\e$ (it is clear from the proof that $\F$ is still algebraic, and also has rational rotations, so after iterating we may assume it has no rotations). To finish the proof we apply Theorem \ref{thm:Furstenberg} to $\F$ and let $\e\searrow 0$ along a sequence.

\subsection{Dimension of projections in the rational case}

We now apply Theorem \ref{thm:hochman} to prove Theorem \ref{thm:prof-planar-sss}(iii). Since we already know that in the irrational rotation case there are no exceptional directions at all, it remains to deal with the rational rotation case. Once again, we will first establish a corresponding result for measures, but imposing some additional structure on the IFS. Then we will deduce the general case for sets from Lemma \ref{lem:approx-from-inside}.

\begin{proposition} \label{prop:dim-proj-ssm-rational}
Let $\mathcal{F}=(f_i)_{i\in\Lambda}$ be a planar IFS satisfying the SSC, where $f_i(x)=\lambda x+t_i$ for all $i$, that is, the maps $f_i$ are homotheties with the same contraction ratio. There exists a set $E\subset [0,\pi)$ of zero Hausdorff (and even packing) dimension, such that if $\mu_p=\mu(\F,p)$ denotes the self-similar measure for $\mathcal{F}$ and the weight $p$, then
\[
\hdim\Pi_\alpha\mu_p = \min(\sdim\mu_p,1) \quad\text{for all }\alpha\in [0,\pi)\setminus E.
\]
In particular, if $A$ is the attractor of $\F$, then
\[
\hdim \Pi_\alpha A= \min(\hdim A,1) \quad\text{for all }\alpha\in [0,\pi)\setminus E.
\]
\end{proposition}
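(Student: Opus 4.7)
The plan is to combine Hochman's theorem (Theorem~\ref{thm:hochman}) with a direct covering/box-counting argument. Since all the $f_i$ share contraction ratio $\lambda$, for every $\alpha$ the push-forward $\Pi_\alpha \mu_p$ is the self-similar measure of the $1$-dimensional IFS $\F_\alpha := (\lambda x + \Pi_\alpha t_i)_{i\in\Lambda}$ for the same weight vector $p$. The similarity dimension $\sum_i p_i \log p_i/\log\lambda$ does not depend on $\alpha$, so $\sdim \Pi_\alpha \mu_p = \sdim \mu_p$ throughout. Consequently, if $\hdim \Pi_\alpha \mu_p < \min(\sdim \mu_p, 1)$, Theorem~\ref{thm:hochman} forces $\F_\alpha$ to have superexponential concentration of cylinders. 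Let
\[
E := \{\alpha \in [0,\pi) : \F_\alpha \text{ has SCC}\};
\]
it suffices to prove $\pdim E = 0$, since $\hdim E \leq \pdim E$.

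Writing $f_I(x) = \lambda^k x + t_{k,I}$ for $I \in \Lambda^k$, and setting $v_{IJ} := t_{k,I} - t_{k,J}$, one has $\Delta(\F_\alpha^k) = \min_{I \neq J}|\Pi_\alpha v_{IJ}|$. The SSC forbids exact overlaps, so every $v_{IJ}$ is nonzero; in fact, if $I$ and $J$ first disagree at position $m$, then $f_I(A) \subset f_{I|_{m-1}}(f_{i_m}(A))$ and $f_J(A) \subset f_{I|_{m-1}}(f_{j_m}(A))$, so for any fixed $x_0 \in A$,
\[
|v_{IJ}| = |f_I(x_0) - f_J(x_0)| \geq \dist(f_I(A), f_J(A)) \geq d\, \lambda^{m-1} \geq c_0 \lambda^k,
\]
where $d := \min_{i \neq j} \dist(f_i(A), f_j(A)) > 0$ by SSC and $c_0 := d/\lambda$. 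Now fix $c > \log(1/\lambda)$. Writing $\Pi_\alpha v = |v|\cos(\alpha - \beta(v))$, each set $\{\alpha : |\Pi_\alpha v_{IJ}| < e^{-ck}\}$ is an interval in $[0,\pi)$ of length at most $2e^{-ck}/|v_{IJ}| \leq (2/c_0)\, e^{-k(c - \log(1/\lambda))} =: r_k$. Since there are at most $|\Lambda|^{2k}$ pairs, the set
\[
E_{c,k} := \{\alpha : \Delta(\F_\alpha^k) < e^{-ck}\}
\]
is covered by at most $|\Lambda|^{2k}$ intervals of length $r_k$, with $r_k \to 0$ exponentially.

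For $K \geq 1$ set $F_{c,K} := \bigcap_{k \geq K} E_{c,k}$. Then $F_{c,K} \subset E_{c,k}$ for each $k \geq K$, so $N_{r_k}(F_{c,K}) \leq |\Lambda|^{2k}$; a routine interpolation to intermediate scales $r \in [r_{k+1}, r_k]$ yields
\[
\ubdim F_{c,K} \leq \frac{2\log|\Lambda|}{c - \log(1/\lambda)},
\]
independently of $K$. By the definition of SCC, $E \subset \bigcup_{K \geq 1} F_{c,K}$ for every $c > \log(1/\lambda)$, so by countable stability of packing dimension and $\pdim \leq \ubdim$,
\[
\pdim E \leq \sup_K \pdim F_{c,K} \leq \sup_K \ubdim F_{c,K} \leq \frac{2\log|\Lambda|}{c - \log(1/\lambda)}.
\]
Letting $c \to \infty$ gives $\pdim E = 0$. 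The statement for the set $A$ follows by taking $p$ to be the natural weights $p_i = \lambda^{\sdim \F}$: then $\sdim \mu_p = \hdim A$ (by SSC), and combining $\hdim \Pi_\alpha \mu_p \leq \hdim \Pi_\alpha A$ with the trivial bound $\hdim \Pi_\alpha A \leq \min(\hdim A, 1)$ closes the argument.

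The substantive input is Hochman's theorem; after that, the only nontrivial step is the uniform lower bound $|v_{IJ}| \geq c_0 \lambda^k$, which uses SSC in an essential way. A secondary point, distinguishing this from the simpler Hausdorff-dimension covering (Borel--Cantelli) estimate, is that to get \emph{packing} dimension zero one must bound $\ubdim F_{c,K}$ for each closed $F_{c,K}$ and then appeal to countable stability, rather than estimate the Hausdorff dimension of a limsup set directly.
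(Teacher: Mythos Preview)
Your proof is correct and follows the same route as the paper's: invoke Theorem~\ref{thm:hochman} to reduce to showing that the set $E$ of directions with SCC has packing dimension zero; then, for each fixed speed parameter (your $c$, the paper's $\e$), cover $\{\alpha:\Delta(\F_\alpha^k)<e^{-ck}\}$ by $|\Lambda|^{2k}$ short intervals, bound the upper box dimension of $\bigcap_{k\ge K}\{\cdots\}$, use $\sigma$-stability of packing dimension to pass to the union over $K$, and finally let the parameter tend to infinity. The paper works in the parametrization $P_u(x,y)=x+uy$ rather than with $\Pi_\alpha$, but this is cosmetic.

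One minor difference worth noting: you derive $|v_{IJ}|\ge c_0\lambda^k$ from the SSC, while the paper asserts the uniform bound $|t_{j,j'}|>c$. Your bound is the one that actually follows from SSC (take $I,J$ differing only in the last coordinate to see that a $k$-independent lower bound fails), and it is all that either argument really needs: with it the covering intervals have length $O((\e/\lambda)^k)$ rather than $O(\e^k)$, which still gives $\ubdim F_{c,K}=O(1/c)$ and hence $\pdim E=0$ on letting $c\to\infty$.
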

\begin{proof}
The latter claim follows by applying the first claim to the natural weights. Once again, instead of working directly with orthogonal projections $\Pi_\alpha$, we work with the family $P_u(x,y)=x+uy$; this makes no difference in the statement since the reparametrization is smooth and hence preserves Hausdorff and packing dimension.

As before, write $\F_u$ for the projected IFS $(\lambda x+P_u t_i)_{i\in\Lambda}$. Note that
\[
\Delta(\F_u^k)= \min_{j\neq j'\in \Lambda^k} \Gamma_{j,j'}(u),
\]
where
\[
\Gamma_{j,j'}(u) = \left|\sum_{i=0}^{k-1} \lambda^i P_u t_{j_i}- \sum_{i=0}^{k-1} \lambda^i P_u t_{j'_i}\right| =\left| P_u\left(\sum_{i=0}^{k-1} \lambda^i( t_{j_i}-t_{j'_i})\right) \right|=: | P_u(t_{j,j'})|.
\]
Since $\F$ satisfies the SSC, $|t_{j,j'}|>c>0$ for some $c=c(\F)$. Let $E$ denote the set of $u$ such that $\F_u$ has SCC. Then, by definition of SCC,
\[
E\subset \bigcap_{\e>0} \bigcup_{K=1}^\infty \bigcap_{k=K}^\infty \bigcup_{j\neq j'\in\Lambda^k} \Gamma_{j,j'}^{-1}(-\e^k,\e^k)=: \bigcap_{\e>0} E_\e.
\]
Fix an interval $I=[-M,M]$. We first note that, since $|t_{j,j'}|>c$, for large enough $k$, the set $I\cap  \Gamma_{j,j'}^{-1}(-\e^k,\e^k)$ can be covered by an interval of length $O(\e^k)$. Hence $\bigcup_{j\neq j'\in\Lambda^k} \Gamma_{j,j'}^{-1}(-\e^k,\e^k)$ can be covered by $|\Lambda|^{2k}$ intervals of length $O(\e^k)$, which implies that
\[
\ubdim\left( \bigcap_{k=K}^\infty \bigcup_{j\neq j'\in\Lambda^k} \Gamma_{j,j'}^{-1}(-\e^k,\e^k)\right) \le O\left(|\log\e|^{-1}\right),
\]
with the implicit constant depending on $|\Lambda|$, where $\ubdim$ is upper box-counting (or Minkowski) dimension. In turn, since packing dimension is $\sigma$-stable and bounded above by $\ubdim$, this shows that $\pdim(E_\e\cap I)=O\left(|\log\e|^{-1}\right)$. Since $I=[-M,M]$ was arbitrary, we conclude that $\pdim(E)=0$. The claim now follows from Theorem \ref{thm:hochman}.
\end{proof}

The above proposition is a particular case of \cite[Theorem 1.8]{Hochman14}, which deals with much more general analytic families of self-similar measures. The proof of the more general result is similar, except that in order to show that $\Gamma_{j,j'}^{-1}(-\e^k,\e^k)$ can be covered efficiently one needs to rely on ``higher-order transversality'' estimates (which are trivial in our setting because $\Gamma_{j,j'}$ is affine).

Claim (iii) of Theorem \ref{thm:prof-planar-sss} follows from Proposition \ref{prop:dim-proj-ssm-rational} and Lemma \ref{lem:approx-from-inside} in exactly the same way as part (ii) followed from Theorem \ref{thm:Furstenberg}.

\section{Absolute continuity of projections}
\label{sec:absolute-continuity}

The methods from \cite{PeresShmerkin09, NPS12, HochmanShmerkin12, Hochman14} that we have discussed so far appear to be intrinsically about dimension (of sets or measures) and so far have not yielded any new information about positive Lebesgue measure or absolute continuity when the similarity dimension exceeds the dimension of the ambient space. Recently, in \cite{Shmerkin14, ShmerkinSolomyak14} these results on dimension have been combined with some new ideas to yield absolute continuity outside a small set of parameters for many parametrized families of self-similar (and related) measures. One particular application of these ideas is the last claim of Theorem \ref{thm:prof-planar-sss}. In this section we discuss the main steps in the proof, referring the reader to \cite{ShmerkinSolomyak14} for the details.

We start by describing the general scheme for proving absolute continuity outside a small set of parameters. The measures $\mu_u$ to which the method applies have an infinite convolution structure: they are the distribution of a random sum
\[
\sum_{n=1}^\infty X_{u,n},
\]
where $X_{u,n}$ are independent Bernoulli random variables, and $\|X_{u,n}\|_\infty$ decreases exponentially uniformly in $u$, so that the series converges absolutely. Once a large integer $k$ is fixed, this allows as to express $\mu_u$ as a convolution $\eta_u*\nu_u$, where $\eta_u$ is the distribution of $\sum_{k|n} X_{u,n}$, and $\nu_u$ is the distribution of $\sum_{k \nmid n} X_{u,n}$. The dimension results discussed in the previous sections can be applied to show that, in many cases, $\nu_u$ has full dimension for all parameters $u$ outside of a small set of exceptions (note that, in the definition of $\nu_u$, we are skipping every $k$-th term only, so $\nu_u$ should be ``almost as large'' as $\mu_u$). On the other hand, adapting a combinatorial method that goes back to Erd\H{o}s \cite{Erdos40} and has become known as the ``Erd\H{o}s-Kahane'' argument, it is often possible to show that the Fourier transform
\[
\widehat{\eta}_u(\xi) = \int \exp(2\pi i \langle x,\xi\rangle )\,d\eta_u(x)
\]
has a power decay, again outside of a small set of possible exceptions (because $\eta_u$ is defined by keeping only every $k$-th term, these measures will have very small dimension and hence a very small power decay, but all that will matter is that it is positive).

Recall that the \textbf{Fourier dimension} of a measure $\eta$ is defined as
\[
\fdim(\eta) = \sup\{ \sigma\ge 0: \exists C, |\widehat{\eta}(\xi)|\le C|\xi|^{-\sigma/2} \}.
\]
Absolute continuity then follows from the following general fact:

\begin{theorem} \label{thm:convolutions}
Let $\eta,\nu$ be Borel probability measures on $\R^d$.
\begin{enumerate}
\item[(i)] If $\hdim\nu+\fdim\eta>d$, then $\eta*\nu$ has an absolutely continuous density.
\item[(ii)] If $D_q\nu+\fdim\eta>d$ for some $q\in (1,2]$, then $\eta*\nu$ has a density in $L^q$.
\end{enumerate}
\end{theorem}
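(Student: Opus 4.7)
The plan is Fourier-analytic, combining Plancherel's theorem, the convolution rule $\widehat{\eta*\nu}=\hat\eta\cdot\hat\nu$, and the classical energy-Fourier identity
\[
I_s(\mu)=c_{d,s}\int|\hat\mu(\xi)|^2|\xi|^{s-d}\,d\xi,\quad 0<s<d.
\]
The key point is that Fourier decay of $\eta$ acts like a weight $(1+|\xi|)^{-\sigma}$ in the Plancherel identity, which lets energy estimates on $\nu$ translate into $L^2$ bounds on $\eta*\nu$.

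For part (i) I would first reduce to an $L^2$ estimate on a large-mass subset of $\nu$. Since $\hdim\nu>d-\fdim\eta$, choose $s<\hdim\nu$ and $\sigma<\fdim\eta$ with $s+\sigma>d$. The characterization $\hdim\nu=\inf\{\hdim A:\nu(A)>0\}$, combined with Egorov's theorem applied to the lower local dimension of $\nu$, yields for any $\eps>0$ a Borel set $F$ with $\nu(F)>\nu(\R^d)-\eps$ on which the uniform Frostman bound $\nu_F(B(x,r))\le Cr^s$ holds, where $\nu_F:=\nu|_F$. In particular $I_s(\nu_F)<\infty$. Using Plancherel together with $|\hat\eta(\xi)|^2\le C(1+|\xi|)^{-\sigma}$,
\[
\|\eta*\nu_F\|_2^2=\int|\hat\eta|^2|\hat{\nu_F}|^2\,d\xi\le C\int|\hat{\nu_F}|^2(1+|\xi|)^{-\sigma}\,d\xi\lesssim I_{d-\sigma}(\nu_F)+\nu_F(\R^d)^2,
\]
which is finite since $d-\sigma<s$ and $\nu_F$ is compactly supported. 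Thus $\eta*\nu_F\in L^2$ is absolutely continuous. Since $\eta*\nu-\eta*\nu_F$ has total mass at most $\eps$, the singular part of $\eta*\nu$ has mass at most $\eps$; letting $\eps\to 0$ yields absolute continuity.

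For part (ii) with $q=2$ the same Plancherel estimate applies directly to $\nu$, with no Frostman reduction: $D_2\nu+\fdim\eta>d$ ensures $I_{d-\sigma}(\nu)<\infty$ for some $\sigma<\fdim\eta$, since $D_2\nu>s$ implies $I_s(\nu)<\infty$ via the dyadic decomposition $I_s(\nu)\asymp\sum_n 2^{sn}I_2(\nu,2^{-n})$. For general $q\in(1,2]$, I would bound $\|(\eta*\nu)*\rho_r\|_q$ uniformly in $r$ for a smooth mollifier $\rho_r$ by combining the Fourier decay of $\eta$ (to handle high frequencies) with the $L^q$-dimension estimate $I_q(\nu,r)\lesssim r^{(q-1)s}$ (to handle low frequencies) via a dyadic Littlewood--Paley-type decomposition; uniform boundedness of $\|(\eta*\nu)*\rho_r\|_q$ then produces an $L^q$ density by weak compactness.

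The main obstacle is the case $q\neq 2$ of part (ii): Plancherel is no longer available, so one cannot identify $\|\cdot\|_q$ with a Fourier integral, and the Hausdorff--Young inequality only goes one way. Making the frequency decomposition precise, and exploiting the asymmetric roles of $\eta$ (Fourier decay) and $\nu$ ($L^q$-dimension control) without overcounting, is the technical heart of the argument; this is where I would expect to spend most of the effort.
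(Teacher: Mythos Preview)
Your proposal is correct and matches the paper's approach essentially line for line. The paper does not give a self-contained proof but explains that (ii) for $q=2$ is a direct consequence of the energy--Fourier identity and Plancherel, that (i) follows from this case (your Egorov/Frostman restriction is the standard way to effect this reduction, and is exactly what is done in \cite[Lemma 2.1]{Shmerkin14}), and that the general $q\in(1,2]$ case requires a Littlewood--Paley decomposition, which you also identify as the technical heart. One small remark: you do not actually need $\nu_F$ to be compactly supported for $I_{d-\sigma}(\nu_F)<\infty$; the Frostman bound controls the near-diagonal contribution and the finiteness of $\nu_F$ handles the tail, since $|x-y|^{-(d-\sigma)}\le 1$ for $|x-y|\ge 1$.
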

The second part in the case $q=2$ is a rather straightforward consequence of well-known identities relating $D_2$ to energies, and energies to the Fourier transform, while the first part follows from the second for $q=2$ (or any other value of $q$); see \cite[Lemma 2.1]{Shmerkin14}. The second part for arbitrary values of $q\in (1,2]$ is somewhat more involved, and relies on the Littlewood-Paley decomposition; it is proved in \cite[Theorem 4.4]{ShmerkinSolomyak14} (where a version for $q\in (2,+\infty)$ is also established). The intuition behind the theorem is that convolving with a measure of positive Fourier dimension is a smoothing operation (positive Fourier dimension is a kind of ``pseudo-randomness'' indicator), which is enough to ``upgrade'' full or almost full dimension to absolute continuity.

We now indicate how to implement the above strategy for projections of planar self-similar measures $\mu$. We need to assume that the IFS is homogeneous; this is to ensure that the measure we are projecting, and therefore also its projections, have the desired convolution structure.

\begin{theorem} \label{thm:abs-cont-projections}
Let $(f_i(x)=S x+t_i)_{i\in\Lambda}$ be a homogeneous IFS on $\R^2$ satisfying the SSC and $\sdim\F>1$. Then there exists a set $E\subset [0,\pi)$ of zero Hausdorff dimension, such that for all $\alpha\in [0,\pi)\setminus E$ the following holds:
\begin{enumerate}
\item[(i)] Let $\mu_p=\mu(\F,p)$. If $\hdim\mu_p>1$, then $\Pi_\alpha\mu_p$ is absolutely continuous.
\item[(ii)] In the irrational rotation case, if $D_q\mu_p>1$, then $\Pi_\alpha \mu_p$ has an $L^q$ density.
\item[(iii)] Moreover, in the rational rotation case, if $\hdim\mu_p>1$, then $\Pi_\alpha\mu_p$ has a density in $L^q$ for some $q=q(\F,p,\alpha)>1$.
\end{enumerate}
\end{theorem}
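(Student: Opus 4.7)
The plan is to implement, for planar self-similar measures, the general convolution scheme outlined immediately before the theorem: decompose the projection $P_u\mu_p$ as $\eta_u^{(k)}\ast\nu_u^{(k)}$, show that $\nu_u^{(k)}$ has full Hausdorff (or $L^q$) dimension outside a small set using the projection theorems of Sections 4--5, show that $\eta_u^{(k)}$ has positive Fourier dimension outside a small set by an Erd\H{o}s--Kahane argument, and conclude via Theorem \ref{thm:convolutions}. As before, I would reparametrize orthogonal projections via $P_u(x,y)=x+uy$.

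First I would set up the convolution structure. Since $\F$ is homogeneous with common linear part $S$, the measure $\mu_p$ is the law of $X=\sum_{n\ge 0} S^n t_{i_n}$ with $(i_n)$ i.i.d.\ of law $p$. Fixing a large integer $k$, split $X=X_k+Y_k$ with $X_k=\sum_{k\mid n} S^n t_{i_n}$ and $Y_k=\sum_{k\nmid n}S^n t_{i_n}$; this gives $P_u\mu_p=\eta_u^{(k)}\ast \nu_u^{(k)}$, where $\nu_u^{(k)}$ is the $P_u$-projection of the self-similar measure for the homogeneous IFS $\tilde\F_k$ of $|\Lambda|^{k-1}$ maps all with linear part $S^k$, carrying weights $p^{\otimes(k-1)}$. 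A direct computation gives $\sdim \mu(\tilde\F_k,p^{\otimes(k-1)})=\tfrac{k-1}{k}\sdim\mu_p$ and, more importantly, $\hdim\mu(\tilde\F_k,p^{\otimes(k-1)})=\tfrac{k-1}{k}\hdim\mu_p$ (and likewise for $D_q$). Since $\hdim\mu_p>1$ (resp.\ $D_q\mu_p>1$), choosing $k$ large enough ensures this underlying dimension still exceeds $1$. In the irrational rotation case the angle of $S^k$ is still irrational, so Theorem \ref{thm:L^q-dim-proj} gives $D_q(\nu_u^{(k)})=1$ for every $u$ (this handles (ii), and (i) in the irrational case). In the rational rotation case, after iterating once more to kill rotations, Proposition \ref{prop:dim-proj-ssm-rational} gives $\hdim(\nu_u^{(k)})=1$ for all $u$ outside a set $E_k$ of zero packing dimension.

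Next I would treat $\eta_u^{(k)}$. Its characteristic function factorises as $\widehat{\eta_u^{(k)}}(\xi)=\prod_{m\ge 0}\widehat{\beta_{m}}(\xi)$, where $\beta_m$ is the law of the single Bernoulli step $P_u(S^{mk}t_{i_{mk}})$. The Erd\H{o}s--Kahane method, in the form used in \cite{Shmerkin14,ShmerkinSolomyak14}, quantifies the rarity of those $u$ for which this product fails to exhibit polynomial decay: if $|\widehat{\eta_u^{(k)}}(\xi)|\ge |\xi|^{-\sigma}$ along a lacunary sequence $\xi_n$, then a definite fraction of the factors $\widehat{\beta_m}(\xi_n)$ are near $1$, forcing $u$ into a shrinking family of Diophantine-type neighbourhoods whose natural covering count yields zero Hausdorff dimension. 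The outcome is a zero-dimensional exceptional set $F_k$ such that $\fdim(\eta_u^{(k)})>0$ for $u\notin F_k$.

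Finally I would combine everything: set $E=\bigcup_k (E_k\cup F_k)$, still of zero Hausdorff dimension. For $\alpha\notin E$, Theorem \ref{thm:convolutions}(i) applied to $\nu_u^{(k)}$ and $\eta_u^{(k)}$ (with $k$ large) gives absolute continuity when $\hdim\mu_p>1$, yielding (i); Theorem \ref{thm:convolutions}(ii) gives the $L^q$ conclusion in (ii) under $D_q\mu_p>1$ in the irrational case. For the rational case (iii), one would use that $q\mapsto D_q\mu_p$ is continuous at $q=1^+$ so $D_q\mu_p>1$ for some $q=q(\F,p)>1$; applying an $L^q$-version of Proposition \ref{prop:dim-proj-ssm-rational} gives $D_q(\nu_u^{(k)})=1$ off a zero-dimensional set, and Theorem \ref{thm:convolutions}(ii) concludes.

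The main obstacle, I expect, is the Erd\H{o}s--Kahane step: obtaining \emph{positive Fourier dimension} rather than mere decay in measure, and controlling the exceptional set of $u$ at the level of \emph{Hausdorff} dimension (rather than Lebesgue measure), requires a careful parameter count that handles the rotations appearing in $S^{mk}$ and works uniformly in $k$. A secondary, more technical, obstacle specific to part (iii) is establishing the $L^q$ analogue of Proposition \ref{prop:dim-proj-ssm-rational} in the rational rotation case, which does not follow formally from its Hausdorff-dimension version and requires combining the sub-multiplicativity inequality \eqref{eq:subm} with a polynomial-separation estimate in the spirit of Lemma \ref{lem:polynomial} to control $L^q$ dimensions of projections off a set of zero packing dimension.
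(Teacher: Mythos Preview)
Your treatment of parts (i) and (ii) is essentially the paper's proof: the convolution decomposition, the dimension computation $\hdim\nu=(1-1/k)\hdim\mu_p$, the use of Theorem \ref{thm:L^q-dim-proj} (irrational case) or Proposition \ref{prop:dim-proj-ssm-rational} (rational case) on the $\nu$ factor, the Erd\H{o}s--Kahane argument on the $\eta$ factor, and the conclusion via Theorem \ref{thm:convolutions} all match.

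For part (iii), however, your route does not work. You propose to use continuity of $q\mapsto D_q\mu_p$ at $q=1^+$ to find $q>1$ with $D_q\mu_p>1$, and then invoke an ``$L^q$-version of Proposition \ref{prop:dim-proj-ssm-rational}''. But such an $L^q$ version would amount to an $L^q$ analogue of Hochman's Theorem \ref{thm:hochman}, which is an open problem---the paper says so explicitly. Your suggested fix, combining the sub-multiplicativity \eqref{eq:subm} with a separation estimate \`a la Lemma \ref{lem:polynomial}, cannot rescue this: in the no-rotation case the cocycle in \eqref{eq:subm} is over the \emph{identity} map, so sub-multiplicativity merely gives existence of the limit (Fekete), not its value, and there is no ergodic averaging to exploit; Lemma \ref{lem:polynomial} concerns algebraic parameters and says nothing about generic $u$.

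The paper's actual argument for (iii) applies the continuity $\lim_{q\to 1^+}D_q(\tau)=\hdim\tau$ not to the original $\mu_p$ (where it is trivial, since the OSC holds) but to the \emph{projected} measure $\tau=\Pi_\alpha\nu$. In the no-rotation case $\Pi_\alpha\nu$ is itself a homogeneous self-similar measure on $\R$, typically with heavy overlaps, and the continuity of $D_q$ at $q=1^+$ for such measures is a separate, non-trivial input (\cite[Theorem 5.1]{ShmerkinSolomyak14}). One first uses Proposition \ref{prop:dim-proj-ssm-rational} to get $\hdim(\Pi_\alpha\nu)=1$ off a zero-dimensional set, then this continuity result yields $D_q(\Pi_\alpha\nu)>1-\fdim(\Pi_\alpha\eta)$ for some (non-explicit, $\alpha$-dependent) $q>1$, and Theorem \ref{thm:convolutions}(ii) concludes. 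This is why $q$ in (iii) depends on $\alpha$ and is not explicit, unlike in (ii).
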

Note that, since $\F$ satisfies the SSC, there are explicit formulae for $\hdim\mu, D_q\mu$, and $\lim_{q\to 1^+} D_q\mu=\hdim\mu$, and the set of $p$ to which the theorem applies is nonempty (it includes, for example, the natural weights). In particular, it follows from the second part that, in the irrational rotation case, if $\hdim\mu_p>1$ then $\Pi_\alpha\mu_p$ has an $L^q$ density for an explicit $q>1$ that is independent of $\alpha$ and $p$. Also, if $D_q\mu_p<1$, then $D_q\Pi_\alpha(\mu_p)<1$, and $\Pi_\alpha(\mu_p)$ cannot have an $L^q$ density, so the second part is sharp up to the endpoint. Thus we know a lot less about the density of the projections in the rational rotation case.

\begin{proof}[Sketch of proof]
First of all, by replacing $\F$ by $\F^k$ for suitable $k$, in the rational rotation case we may assume that $S$ is a homothety, i.e. there are no rotations at all.

The self-similar measure $\mu_p$ is the distribution of the random sum $\sum_{n=1}^\infty X_n$, where $\mathbb{P}(X_n=S^n t_i)=p_i$, and the $X_n$ are independent. Indeed, this measure is easily checked to satisfy the defining relation $\mu_p=\sum_{i\in\Lambda} p_i\, f_i\mu_p$. Since $p$ is fixed in the proof we drop any explicit reference to it from now on.

As indicated above, let $k$ be a large integer to be determined later, and let $\eta, \nu$ be the distribution of the random sums $\sum_{k|n} X_{n}$, $\sum_{k\nmid n} X_{n}$ respectively, so that $\mu=\eta*\nu$ and therefore $\Pi_\alpha\mu=\Pi_\alpha\eta *\Pi_\alpha\nu$. This fits with the general description above, since $\Pi_\alpha\mu$ is the distribution of $\sum_{n=1}^\infty X_{\alpha,n}$, where $\mathbb{P}(X_{\alpha,n}=\Pi_\alpha(S^n t_i))=p_i$, and the $X_{\alpha,n}$ are independent, and likewise with $\Pi_\alpha\eta, \Pi_\alpha\nu$.

Both $\eta, \nu$ are again homogeneous self-similar measures of the same rotation type (irrational or no rotation), which also satisfy the SSC. Moreover, a direct calculation shows that
\begin{align*}
\hdim\nu &= (1-1/k)\hdim\mu,\\
D_q\nu &= (1-1/k)D_q\mu.
\end{align*}

Consider first the irrational rotation case, and suppose $p$ is such that $D_q\mu>1$. Provided we chose $k$ large enough, then also $D_q\nu>1$. By Theorem \ref{thm:L^q-dim-proj}, $D_q\Pi_\alpha\nu=1$ for all $\alpha$. On the other hand, a combinatorial argument similar to (although slightly more involved than) the classical Erd\H{o}s' argument from \cite{Erdos40}, shows that $\fdim \Pi_\alpha \eta>0$ outside of a possible exceptional set of zero Hausdorff dimension. See \cite[Proposition 3.3]{ShmerkinSolomyak14} (this also holds in the no rotation case). Claim (ii) then follows from Theorem \ref{thm:convolutions}, and we have already seen that this implies (i) in the irrational rotation case.

The first claim in the no-rotations case follows in the same way, using Proposition \ref{prop:dim-proj-ssm-rational} instead of Theorem \ref{thm:L^q-dim-proj}. A priori this gives no information whatsoever about the densities (the reason being that Theorem \ref{thm:hochman} is about Hausdorff dimension and it is unknown if it holds for $L^q$ dimension for any $q$). However, in \cite[Theorem 5.1]{ShmerkinSolomyak14} we have shown that for any homogeneous self-similar measure $\tau$, and in particular for $\tau=\Pi_\alpha \nu$ in the no-rotations case,
\[
\lim_{q\to 1^+} D_q(\tau) = \hdim\tau.
\]
(This is immediate from the explicit formulae under the OSC, the point is that it holds regardless of overlaps.) Hence, if $\alpha$ is such that $\hdim(\Pi_\alpha\nu)=1$ and $\fdim(\Pi_\alpha\eta)>0$, there is a (non-explicit) $q>1$ such that $D_q(\Pi_\alpha\nu)+\fdim(\Pi_\alpha\eta)>1$. The third claim then follows again from Theorem \ref{thm:convolutions}.
\end{proof}

Using Lemma \ref{lem:approx-from-inside} once again, we conclude the proof of Theorem \ref{thm:prof-planar-sss}(iv) in the by now familiar way.

Unfortunately, the proof of Theorem \ref{thm:abs-cont-projections} (and hence of Theorem \ref{thm:prof-planar-sss}(iv))  is completely non-effective. The reason is that it seems very hard to prove that a given projection of a self-similar measure has power Fourier decay, even though we know that all outside of a zero-dimensional set do!

\section{Further results}
\label{sec:further-results}

We briefly discuss projections of other natural classes of sets and measures. This section has some overlap with \cite[Sections 8 and 9]{FFJ14}.

\subsection{Bernoulli convolutions}

Given $\lambda\in (0,\tfrac12)$, the \textbf{Bernoulli convolution} $\nu_\lambda$ is the self-similar measure for the IFS $(\lambda x-1,\lambda x+1)$ with weights $(\tfrac{1}{2},\tfrac{1}{2})$. Alternatively, $\nu_\lambda$ is the distribution of the random sum $\sum_{n=0}^\infty \pm \lambda^n$, where the signs are chosen independently with equal probabilities; this explains the name. When $\lambda\in (0,1/2]$, the generating IFS satisfies the OSC and the measure $\nu_\lambda$ is well understood; however, for $\lambda\in (\tfrac{1}{2},1)$, surprisingly little is known. It is known since Erd\H{o}s \cite{Erdos39} that if $\lambda^{-1}$ is a Pisot number (an algebraic integer larger than $1$, all of whose algebraic conjugates are smaller than $1$ in modulus), then $\nu_\lambda$ is singular. It is not known if there are any other $\lambda\in (\tfrac12,1)$ for which $\nu_\lambda$ is singular. Solomyak \cite{Solomyak95} proved that $\nu_\lambda$ is absolutely continuous with an $L^2$ density for almost all $\lambda\in (\tfrac12,1)$. This is a kind of Marstrand Theorem for a family of \textit{nonlinear} projections. Using the method described in Section \ref{sec:absolute-continuity}, the author proved in \cite{Shmerkin14} that $\nu_\lambda$ is absolutely continuous for $\lambda$ outside of a zero Hausdorff dimension set of exceptions, and in \cite{ShmerkinSolomyak14} we showed that, furthermore, outside this exceptional set, $\nu_\lambda$ has a density in $L^q$ for some non-explicit $q=q(\lambda)>1$. These results rely heavily on Theorem \ref{thm:hochman}.

\subsection{Self-similar sets in higher dimension}

Much less is known about projections of self-similar sets in higher dimensions. In dimensions $d\ge 3$ there is no neat decomposition into ``rational rotation'' and ``irrational rotation'' cases. In particular, if the orthogonal parts of all the maps in the IFS coincide, then they cannot generate a dense subgroup of the orthogonal group - this is problematic for generalizing Theorems \ref{thm:L^q-dim-proj} and \ref{thm:abs-cont-projections}. Also, the lack of commutativity precludes approximation arguments such as Lemma \ref{lem:approx-from-inside}.  Nevertheless, the more flexible approach of \cite{HochmanShmerkin12} yields the following, see \cite[Theorem 1.6 and Corollary 1.7]{HochmanShmerkin12}.
\begin{theorem}
Let $A\subset\R^d$, $d\ge 2$, be the attractor of the IFS $( \lambda_i O_i x+t_i)_{i\in\Lambda}$, where $\lambda_i\in (0,1)$, $O_i\in\mathbb{O}_d$ and $t_i\in\R^d$. Assume the SSC holds. Fix $1\le k<d$ and let $G_{d,k}$ denote the Grassmanian of $k$-dimensional subspaces of $\R^d$.

Suppose that the action of the semigroup generated by the $O_i$ on $G_{d,k}$ is transitive, that is, $\{ O_{i_1}\cdots O_{i_n}\pi:i_j\in\Lambda,n\in\N \}$ is dense for some (and therefore all) $\pi\in G_{d,k}$. Then for all $C^1$ maps $g:\R^d\to\R^k$ without singular points,
\[
\hdim(g A)=\min(\hdim A,k)
\]
\end{theorem}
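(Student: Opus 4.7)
The plan is to prove the lower bound $\hdim(gA) \ge \min(\hdim A, k)$; the upper bound $\hdim(gA) \le \min(\hdim A, k)$ is immediate, since $g$ is Lipschitz on the compact set $A$ and the image lies in $\R^k$. To obtain the lower bound I would lift the question to a measure-theoretic one: let $\mu$ be the self-similar measure on $A$ with the natural weights $p_i = \lambda_i^s$, where $s = \sdim(\F) = \hdim A$ (equality holds because the SSC is in force). Since $\mu$ is supported on $A$, it suffices to show that $\hdim(g\mu) \ge \min(s, k)$.

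The key intermediate step is a projection theorem for $\mu$ that is \emph{uniform in the direction of projection}: I would aim to establish
\[
\hdim(\proj_\pi \mu) = \min(s, k) \quad \text{for every } \pi \in G_{d,k},
\]
where $\proj_\pi$ denotes orthogonal projection onto the $k$-plane $\pi$. The natural tool to upgrade an a.e.\ statement of Marstrand--Kaufman type into such a uniform one is the CP-chain formalism of Furstenberg, as adapted to the fractal setting in \cite{HochmanShmerkin12}. The self-similar measure $\mu$ generates a canonical CP-distribution $\QQ$ by magnifying $\mu$ around typical points along geometrically decreasing scales. Because the linear parts of the IFS are the $O_i \in \mathbb{O}_d$, the $G_{d,k}$-factor of $\QQ$ is invariant under (and concentrated on the orbit closure of) the semigroup $\Sigma \subset \mathbb{O}_d$ generated by the $O_i$. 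The hypothesis that $\Sigma$ acts with a dense orbit on $G_{d,k}$ forces this factor to be \emph{minimal}, and the refined projection theorem for CP-chains then yields the uniform identity above.

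Once the uniform projection identity is available for $\mu$, the passage to an arbitrary $C^1$ map $g$ without singular points is essentially local linearization. At each $x \in A$, the differential $Dg_x : \R^d \to \R^k$ is surjective; up to a bi-Lipschitz correction whose distortion tends to $1$ as the scale shrinks, the restriction of $g$ to a small ball $B(x,r)$ is the affine map $y \mapsto g(x) + Dg_x(y - x)$. This affine map coincides, modulo an invertible linear map of $\R^k$, with orthogonal projection onto $\pi_x := (\ker Dg_x)^\perp \in G_{d,k}$. A local-to-global argument via the mass distribution principle, applied to the pushforward $g\mu$, then converts the pointwise dimension identities $\hdim(\proj_{\pi_x}\mu) \ge \min(s,k)$ into $\hdim(g\mu) \ge \min(s,k)$.

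I expect the main obstacle to be the step that extracts a \emph{pointwise} (in $\pi$) projection identity from the CP-chain machinery: the general theorem naturally yields a $\QQ$-a.e.\ statement, and upgrading this to every $\pi \in G_{d,k}$ relies precisely on topological minimality of the $\Sigma$-action, as opposed to mere ergodicity of some invariant measure. This is where the dense-orbit hypothesis is crucial, and it is also the reason the argument cannot be run through the approximation scheme of Lemma~\ref{lem:approx-from-inside}, which fails in the non-commutative higher-dimensional setting. The $C^1$ passage from linear to nonlinear projections, by contrast, is technical but routine given the uniform linear statement.
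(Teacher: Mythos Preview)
Your proposal is essentially correct and follows the approach of \cite{HochmanShmerkin12}, which is exactly what the paper invokes for this result; the paper itself does not supply a proof, only the reference.

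One small clarification on the mechanism by which the ``almost every $\pi$'' statement is upgraded to ``every $\pi$'': in the CP-chain argument this does not quite come from minimality of a $G_{d,k}$-factor of $\QQ$. Rather, one shows that the function $\pi \mapsto E_\pi(\mu)$ (the lower bound for $\hdim(\proj_\pi\mu)$ furnished by the CP-distribution) is \emph{lower semicontinuous} on $G_{d,k}$ and invariant under the semigroup $\Sigma$ generated by the $O_i$. Lower semicontinuity together with a dense $\Sigma$-orbit forces this function to be constant, and Marstrand's theorem identifies the constant as $\min(s,k)$. Your phrasing in terms of minimality captures the topological content of the hypothesis, but the analytic ingredient (semicontinuity of $E_\pi$) is what actually closes the argument. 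The passage to $C^1$ maps by local linearization is handled in \cite{HochmanShmerkin12} essentially as you describe.
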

Note that in dimension $d=2$, the transitivity condition is met precisely for self-similar sets of irrational type. Once again, this follows from a corresponding result for measures. Using an approximation argument, Farkas \cite[Theorem 1.6]{Farkas14} was able to remove the SSC assumption.

\subsection{Projections of self-affine carpets}

If the maps $f_i$ in an IFS are affine rather than similarities, the attractor is called a \textbf{self-affine set}. Dimension problems for self-affine sets are notoriously difficult, and almost nothing beyond the general results of Marstrand and others is known about their orthogonal projections, outside of some special classes known as \textit{self-affine carpets}. Roughly speaking, a self-affine carpet is the attractor of an IFS of affine maps that map the unit square onto non-overlapping rectangles with some special pattern (generally speaking, it is required that when projecting these rectangles onto either the $x$ or $y$-axes, there are either no overlaps or exact overlaps).

In \cite{FJS10}, it was proved that under a suitable irrationality condition, for many self-affine carpets $A\subset\R^2$ it holds that $\hdim \Pi A=\min(\hdim(A),1)$ for all projections $\Pi$ other than the principal ones (which are always exceptional for carpets). The proof was based on ideas of \cite{PeresShmerkin09} and did not extend to measures. Recently, based on the approach of \cite{Hochman14}, Ferguson, Fraser and Sahlsten \cite{FFS15} obtained the corresponding results for Bernoulli measures for the natural Markov partition for the $(x,y)\mapsto (px, qy)\bmod 1$ toral endomorphism. This was extended to Gibbs measures by Almarza \cite{Almarza14}.

\subsection{Sums of Cantor sets}

The arithmetic sum $A+B$ of two sets $A,B\subset\R^d$ is $\{ x+y: x\in A, y\in B\}$. Up to an homothety, this is the projection of $A\times B$ under a $45$ degree projection. More generally, the family $A+u B$ is a reparametrization of the projections of $A\times B$ (other than the horizontal projection). The methods discussed in the previous sections can be applied (with suitable modifications) to yield the following analog of Theorem \ref{thm:prof-planar-sss}. Following the terminology of \cite{PeresShmerkin09}, we say that the attractors of $(\lambda_i x+t_i)$ and $(\lambda'_j x+t'_j)$ are \textbf{algebraically resonant} if $\log \lambda_i/\log \lambda'_j$ is rational for all $i,j$.

\begin{theorem}  \label{thm:sum-sss}
 Let $A,B\subset\R$ be self-similar sets, and write $s=\hdim(A)+\hdim(B)$.
 \begin{enumerate}
  \item[(i)] If $A$ and $B$ are not algebraically resonant, then $\hdim(A+uB)=\min(s,1)$ for all $u\in\R\setminus\{0\}$.
  \item[(ii)] If both $A$ and $B$ are given by algebraic parameters, then $\{ u:\hdim(A+uB)<\min(s,1)\}$ is countable.
  \item[(iii)] Without any assumptions, if $s\le 1$, then $\hdim\{ u:\hdim(A+uB)<s\}=0$.
  \item[(iv)] If $s>1$, then $\hdim\{ u: \mathcal{L}(A+uB)=0\}=0$.
 \end{enumerate}
\end{theorem}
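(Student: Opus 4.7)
The plan is to mirror the proof of Theorem \ref{thm:prof-planar-sss} in the text, exploiting the identification
\[
A+uB = P_u(A\times B), \qquad P_u(x,y)=x+uy,
\]
so that $A+uB$ is realised as a linear image of the product $A\times B$. In the algebraically resonant case, all contraction ratios of $\F_A,\F_B$ are integer powers of a common base; iterating and grouping words (as in the proof of Lemma \ref{lem:approx-from-inside}) allows us to assume both attractors are generated by homogeneous IFSs of homotheties sharing a single contraction ratio $\lambda$. The product IFS then presents $A\times B$ as the attractor of a planar homogeneous IFS of homotheties of the form $\lambda(x+a_{ij},y+b_{ij})$, which is precisely the setting of Theorem \ref{thm:Furstenberg} and Proposition \ref{prop:dim-proj-ssm-rational}. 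Since the reparametrization $u\mapsto \tan^{-1}u$ is smooth, parts (ii) and (iii) in the resonant case follow at once from Theorem \ref{thm:Furstenberg} and Proposition \ref{prop:dim-proj-ssm-rational}, combined with an SSC-approximation argument à la Lemma \ref{lem:approx-from-inside} applied simultaneously to $(\F_A,\F_B)$. The resonant part of (iv) follows from Theorem \ref{thm:abs-cont-projections}.

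The main new ingredient is part (i), for which the plan is to prove an analog of Theorem \ref{thm:L^q-dim-proj} for the projected product measure $P_u(\mu\times\nu)$. Taking self-similar measures $\mu$ on $A$ and $\nu$ on $B$, and assuming after iteration that $\F_A,\F_B$ are homogeneous with ratios $\lambda,\lambda'$, the product self-similarity yields a submultiplicative inequality in the spirit of \eqref{eq:subm}:
\[
I_q\bigl(P_u(\mu\times\nu),\lambda^{k}\lambda'^{\ell}r\bigr) \le C_q\, I_q\bigl(P_u(\mu\times\nu),\lambda^{k}\lambda'^{\ell}\bigr)\, I_q\bigl(P_{u\lambda'^{\ell}/\lambda^{k}}(\mu\times\nu),r\bigr),
\]
in which the parameter transformation $u\mapsto u\lambda'^{\ell}/\lambda^{k}$ plays the role of the rotation $\alpha\mapsto\alpha+k\theta$. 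Passing to logarithmic coordinates, this becomes translation on $\R$ by $\ell\log\lambda'-k\log\lambda$, and the non-resonance hypothesis (irrationality of $\log\lambda/\log\lambda'$) guarantees that these translations generate a dense subgroup. After compactifying using the scale-invariance of $D_q$, one obtains a uniquely ergodic skew system, and Furman's subadditive cocycle theorem delivers the pointwise lower bound $D_qP_u(\mu\times\nu)\ge\min(D_q(\mu\times\nu),1)$ for every $u\ne 0$. Choosing natural weights so that $D_2(\mu\times\nu)=\hdim A+\hdim B=s$ and then removing the homogeneity and SSC assumptions via (a paired version of) Lemma \ref{lem:approx-from-inside} yields (i); parts (ii) and (iii) in the non-resonant case are immediate corollaries. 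Finally, the non-resonant case of (iv) is obtained by the convolution strategy of Section \ref{sec:absolute-continuity}: decompose the infinite convolution defining $\mu\times\nu$ by retaining every $k$-th factor versus the rest, apply (i) to the ``thinned'' factor to get full Hausdorff dimension for its $P_u$-projection outside a zero-dimensional set, and combine with an Erd\H{o}s--Kahane-type Fourier decay estimate on the complementary factor, plugged into Theorem \ref{thm:convolutions}.

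The main obstacle I anticipate is the rigorous setup of the cocycle in the non-resonant case. Unlike the rotation appearing in Theorem \ref{thm:L^q-dim-proj}, the parameter dynamics $u\mapsto u\lambda'^{\ell}/\lambda^{k}$ acts multiplicatively on the non-compact space $\R^{*}$, and depends on \emph{two} independent indices $k,\ell$. Upgrading to a uniquely ergodic system on which Furman's theorem bites demands both an appropriate compactification (using that $D_q$ and the inequality above are invariant under scaling $u$ by factors coming from the IFS) and a careful joint control of the $k$- and $\ell$-iterations that is absent in the one-clock rotation setting. Verifying that non-resonance produces the equidistribution actually needed, together with the uniform integrability of the cocycle $\varphi_{k,\ell}(u)=\log I_q(P_u(\mu\times\nu),\lambda^{k}\lambda'^{\ell})+\log C_q$ along suitably chosen double sequences, is the bookkeeping that I expect to absorb most of the work.
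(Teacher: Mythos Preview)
Your proposal is sound and tracks the paper's own treatment closely: the paper does not give a self-contained proof of this theorem but simply cites \cite{PeresShmerkin09} for (i), says (ii) and (iii) ``follow in a very similar manner to the corresponding claims in Theorem~\ref{thm:prof-planar-sss}'', and cites \cite[Theorem~E]{ShmerkinSolomyak14} for (iv). Your reductions via a paired version of Lemma~\ref{lem:approx-from-inside}, and your appeals to Theorem~\ref{thm:Furstenberg}, Proposition~\ref{prop:dim-proj-ssm-rational}, and Theorem~\ref{thm:abs-cont-projections} for (ii)--(iv), are exactly what the paper has in mind.

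For (i) there is a mild difference worth flagging. The original \cite{PeresShmerkin09} argument that the paper cites is a direct covering argument, not the $L^q$/Furman cocycle route you sketch (which is the \cite{NPS12}-style approach the paper uses in Section~\ref{sec:irrational-dim} for Theorem~\ref{thm:prof-planar-sss}(i)). Your route also works, and the obstacle you anticipate has a clean resolution: iterating the $\nu$-self-similarity shows that $P_u(\mu\times\nu)$ and $P_{u\lambda'}(\mu\times\nu)$ differ only by an affine change, so $u\mapsto D_q P_u(\mu\times\nu)$ is invariant under $u\mapsto\lambda' u$; passing to $v=\log|u|\pmod{|\log\lambda'|}$ compactifies the parameter to a circle on which the remaining generator $u\mapsto u/\lambda$ acts as rotation by $\log\lambda\pmod{|\log\lambda'|}$, irrational precisely under non-resonance. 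This reduces you to a single clock and the Furman argument goes through as in Section~\ref{sec:irrational-dim}. (One further bookkeeping point: when approximating $A,B$ by homogeneous SSC subsystems, make sure to choose the surviving contraction ratios so that their log-ratio remains irrational; since non-resonance means \emph{some} pair $\log\lambda_i/\log\lambda'_j$ is irrational, this can always be arranged.)
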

Part (i) was proved in \cite{PeresShmerkin09}, (ii) and (iii) follow in a very similar manner to the corresponding claims in Theorem \ref{thm:prof-planar-sss}, and the last claim is \cite[Theorem E]{ShmerkinSolomyak14}. Measure versions of these results exist, see \cite[Theorem 1.4]{HochmanShmerkin12} and \cite[Theorem D]{ShmerkinSolomyak14}.

Much is known for sumsets beyond self-similar sets and measures. One of the first results in the area, due to Moreira, was a version of Theorem \ref{thm:sum-sss}(i) for attractors of \textit{nonlinear} IFSs (under standard regularity assumptions). See \cite{Moreira98}. A general version of this (valid also for Gibbs measures) was obtained in \cite[Theorem 1.4]{HochmanShmerkin12}. Furthermore, it follows from \cite[Theorem 1.3]{HochmanShmerkin12} that if $A,B\subset [0,1]$ are closed and invariant under $x\mapsto px \bmod 1$, $x\mapsto qx\bmod 1$ respectively, with $\log p/\log q\notin\mathbb{Q}$, then
\[
\hdim(A+uB)=\min(\hdim(A)+\hdim(B),1)\quad\text{ for all }u\in\R\setminus\{0\}.
\]
For $u=1$, this was another conjecture of Furstenberg.

\subsection{Projections of random sets and measures}

There is a vast, and growing, literature on geometric properties of random sets and measures of Cantor type, including the behavior of their projections. We do no more than indicate some references for further reading. Generally speaking, there are two main strands of research in this area. One concerns random sets and measures that include deterministic ones as a special case. In this direction, Falconer and Jin \cite{FalconerJin14, FalconerJin14b} investigated projections of random cascade measures (and related models) on self-similar sets, obtaining generalizations of several of the results we discussed. In \cite{FalconerJin14b}, these results were applied in a clever way to study the dimension of linear sections of \textit{deterministic} self-similar sets. The second line of research concerns sets and measures with a large degree of spatial independence; one of the most popular such models is fractal percolation, consisting in iteratively selecting random squares in the dyadic (or $M$-adic) grid. With stronger independence, one can typically say a lot more, for example proving positive Lebesgue measure, and even nonempty interior, for all projections simultaneously (compare with Theorem \ref{thm:prof-planar-sss}(iii)). See e.g. \cite{RamsSimon14} for results of this type for fractal percolation. A general approach to the study of measures with strong spatial independence was recently developed in \cite{ShmerkinSuomala14}. We refer the reader to this paper for many further references and detailed statements.

\bibliographystyle{plain}
\bibliography{projections_survey}

\end{document}